\newtheorem{theorem}{Theorem}[section]
\newtheorem{corollary}{Corollary}[section]
\newtheorem{lemma}[theorem]{Lemma}
\theoremstyle{definition}
\theoremstyle{remark}
\definecolor{matlabblue}{rgb}{0,0.4470,0.7410}
\definecolor{matlabred}{rgb}{0.8500,0.3250,0.0980}
\definecolor{matlabyellow}{rgb}{0.9290,0.6940,0.1250}
\title{Convergence of Pivoted Cholesky for Lipschitz Kernels}
\author{Sungwoo Jeong and Alex Townsend}
\email{sjeong@cornell.edu, townsend@cornell.edu}
\address{Department of Mathematics, Cornell University}
\date{}
\begin{document}

\begin{abstract}
    We investigate the continuous analogue of the Cholesky factorization, namely the pivoted Cholesky algorithm. Our analysis establishes quantitative convergence guarantees for kernels of minimal smoothness. We prove that for a symmetric positive definite Lipschitz continuous kernel $K:\Omega\times \Omega \rightarrow \mathbb{R}$ on a compact domain $\Omega\subset\mathbb{R}^d$, the residual of the Cholesky algorithm with any pivoting strategy is uniformly bounded above by a constant multiple of the fill distance of pivots. In particular, our result implies that under complete pivoting (where the maximum value of the diagonal of the residual is selected as the next pivot):
    \begin{equation*}
        \|R_n\|_{\infty} = \mathcal{O}(n^{-1/d}),
    \end{equation*}
    where $R_n$ is the residual after $n$ Cholesky steps and $\|\cdot\|_\infty$ is the absolute maximum value of $R_n$. Moreover, if $K$ is differentiable in both variables with a Lipschitz derivative, our convergence rate improves to $\mathcal{O}(n^{-2/d})$. Our result closes a gap between theory and practice as previous analyses required $C^2$-regularity of $K$ to establish convergence, whereas empirical evidence indicated robust performance even for non-differentiable kernels. We further detail how our convergence results propagate to downstream applications, including discrete analogues, Gaussian process regression, and the P-greedy interpolation method.
\end{abstract}

\maketitle

\section{Introduction}
The Cholesky decomposition is one of the cornerstone factorizations in numerical linear algebra, providing an efficient and stable way to represent a symmetric positive definite (SPD) matrix as a product of a lower-triangular matrix and its transpose. It underlies fast solvers for linear systems, preconditioners, and algorithms across statistics, optimization, and scientific computing. While the discrete case is well-understood, a natural extension to SPD kernels is studied far less. The Cholesky algorithm with a pivoting strategy constructs successive low-rank approximations of a kernel by iteratively selecting pivot points and adding rank-one updates (see~\cref{eq:CholeskyAlgorithm}). Despite the pivoted Cholesky algorithm having many downstream applications and related variants such as adaptive cross approximation (ACA)~\cite{bebendorf2000approximation}, pseudoskeleton approximation~\cite{goreinov1997theory}, and P-greedy interpolation~\cite{de2005near}, its convergence theory is not fully developed.

The pivoted Cholesky algorithm is best viewed as an iterative low-rank approximation scheme, rather than as a factorization. Formally, let $\Omega\subset\mathbb{R}^d$ be a compact domain. Given a SPD kernel\footnote{A kernel is called SPD if for any finite set of points $x_1,\ldots,x_n \in \Omega$, the matrix $\big[K(x_j,x_k)\big]_{j,k=1}^n$ is SPD.} $K:\Omega\times \Omega \rightarrow \mathbb{R}$, the first step of the pivoted Cholesky algorithm sets $R_0 = K$ and selects a point $z_1$ (which we call a pivot) based on prescribed selection criteria, which we call the \textit{pivoting strategy}. Then, the algorithm constructs a rank-$1$ approximation of $K$ as
\begin{equation*}
K_1(x,y) = \frac{R_0(x,z_1)R_0(z_1,y)}{R_0(z_1,z_1)},
\end{equation*}
which interpolates $K$ whenever $x = z_1$ or $y = z_1$ and the first residual $R_1=K - K_1$ is obtained. We iteratively apply this procedure. At the $n\geq 1$ step, we select a pivot point $z_n$ according to the pivot strategy and compute the rank-$n$ approximant of $K$ by adding a rank-$1$ kernel to a previous approximant, i.e., 
\begin{equation}
    K_n(x, y) = K_{n-1}(x, y) + \frac{R_{n-1}(x, z_n)R_{n-1}(z_n, y)}{R_{n-1}(z_n, z_n)},
    \label{eq:CholeskyAlgorithm}
\end{equation}
with the residual $R_n=K - K_n$. The approximant $K_n$ interpolates $K$ whenever $x = z_i$ or $y = z_i$, for $i\leq n$. The choice of pivoting strategy is crucial and often determines the quality of the approximation. Various pivoting strategies such as complete pivoting~\cite{golub2013matrix} (where $z_n = \arg\max_{x\in\Omega} |R_{n-1}(x,x)|$), maximum volume pivoting~\cite{gu1996efficient,hong1992rank} (where $z_n$ maximizes the absolute determinant of a certain submatrix), and rook pivoting~\cite{neal1992geometric} are commonly used in practice. Since $K$ is positive definite, $R_n$ is positive semidefinite for all $n$, $R_n(x,x)\geq 0$ for all $x\in\Omega$, and $\|R_n\|_\infty = \max_{x,y\in\Omega} |R_n(x,y)|$~\cite{townsend2015continuous}.

The central question is then: How well does $K_n$ approximate $K$ as $n$ grows? A classical theory~\cite{schmidt1907theorie} provides an answer for the best-case scenario. As proved by Schmidt in 1907~\cite{schmidt1907theorie}, the accuracy of any rank-$n$ approximation cannot be better than the $(n+1)$st singular value of $K$. Throughout, we assume that $K$ is a square-integrable kernels, so that we have
\begin{equation*}
     \operatorname{vol}(\Omega) \|R_n\|_{\infty} \geq \|R_n\|_2 \geq \sigma_{n+1}(K),
\end{equation*}
where $\sigma_{n+1}(K)$ denotes the $(n+1)$st singular value of $K$, $\|\cdot\|_2$ is the operator (spectral) norm, and $\|\cdot\|_{\infty}$ is the absolute maximum value on $\Omega\times \Omega$. For example, take the Brownian kernel $K : [0,1]^d\times [0,1]^d\rightarrow \mathbb{R}$ given by $K(x,y) = \prod_{i=1}^d\min\{x_i,y_i\}$. Then, $\sigma_{n}(K) \geq C_d \log^{2(d-1)}(n)/n^2$ for some constant $C_d$ that only depends on $d$ so $\|R_n\|_\infty$ can at best converge to zero at that rate. 

Bounding $\|R_n\|_\infty$ from above has long been studied for smooth kernels. It has been shown that $\|R_n\|_\infty$ decays exponentially fast to zero as $n\rightarrow\infty$ when $d = 1$, the kernel is more than analytic in each variable, and complete pivoting is used~\cite{cortinovis2020maximum,townsend2015continuous}. Moreover, in $d=1$, Bebendorf has an error bound on $\|R_n\|_\infty$ for so-called asymptotically smooth kernels when rook pivoting is employed, though that typically means the kernel must be infinitely differentiable to achieve convergence~\cite{bebendorf2000approximation}. Another convergence theory for translation invariant kernels, i.e., $K(x,y) = \phi(\|x-y\|)$ for some univariate function $\phi$, is implied by bounds on the power function from the radial basis function literature~\cite{santin6convergence}. Another notable result in the radial basis function literature suggests that the Cholesky algorithm with complete pivoting converges for all kernels $K:\Omega\times \Omega\rightarrow \mathbb{R}$ that are twice continuously differentiable and $\Omega$ is a convex domain~\cite[Thm.~4.3]{de2005near}. 

Unfortunately, these existing results do not explain the convergence of a Cholesky algorithm on kernels with very limited smoothness that arise throughout computational mathematics and applications. Prominent examples include:
(i) the Ornstein--Uhlenbeck (exponential) kernel $K(x,y)=\alpha e^{-|x-y|/\ell}$ with $\alpha,\ell>0$ (merely Lipschitz at $x=y$);
(ii) Matérn kernels with low regularity,\footnote{Here, $\mathbf{K}_\nu$ is the modified Bessel function, $\Gamma$ is the gamma function, and $\|\cdot\|$ denotes the Euclidean distance.}
\[
K_\nu(x,y)=\frac{2^{1-\nu}}{\Gamma(\nu)}\!\left(\frac{\sqrt{2\nu}\,\|x-y\|}{\ell}\right)^\nu
\mathbf{K}_\nu\!\left(\frac{\sqrt{2\nu}\,\|x-y\|}{\ell}\right),\qquad \nu,\ell >0,
\]
whose smoothness is controlled by $\nu$ and for small $\nu$ the kernel is non-differentiable at the diagonal;
(iii) Brownian-motion (Wiener) kernels on $[0,1]$, $K(x,y)=\min\{x,y\}$;
and (iv) Green’s functions of uniformly elliptic self-adjoint PDEs, such as
$G(x,y) = -\frac{1}{2\pi}\log\|x-y\|$ for the 2D Laplacian, $G(x,y)=\frac{1}{4\pi\|x-y\|}$ for the 3D Laplacian, or on $[0,1]$ with Dirichlet data $G(x,y)=\min\{x,y\}-xy$ for $-u''=f$. 
In practice, the Cholesky algorithm with any reasonable pivoting strategy continues to perform remarkably well for these nonsmooth kernels~\Cref{fig:Convergence}.

\begin{figure}[h]
  \centering
  \begin{minipage}{.49\textwidth}
    \centering
    \begin{overpic}[width=\textwidth]{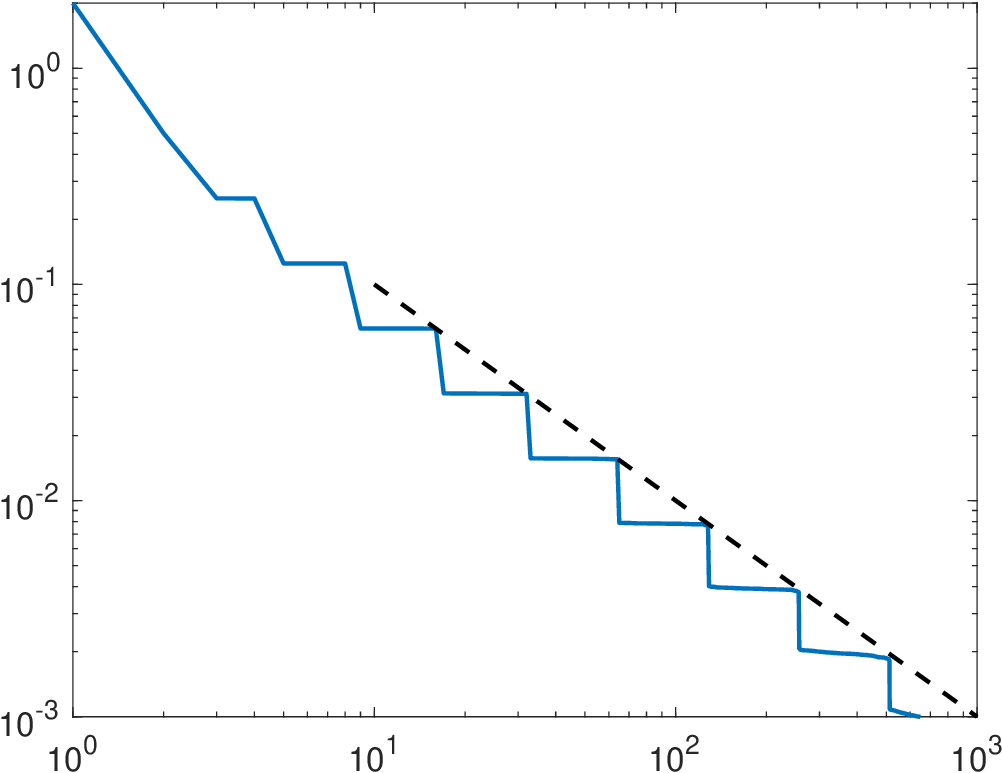}
    \put(60,35) {\rotatebox{-37}{$\mathcal{O}(n^{-1})$}}
    \put(-3,30) {\rotatebox{90}{$\|R_n\|_\infty$}}
    \put(50,-3) {$n$}
    \end{overpic}
  \end{minipage}
  \hfill
  \begin{minipage}{.49\textwidth}
    \centering
    \begin{overpic}[width=\textwidth]{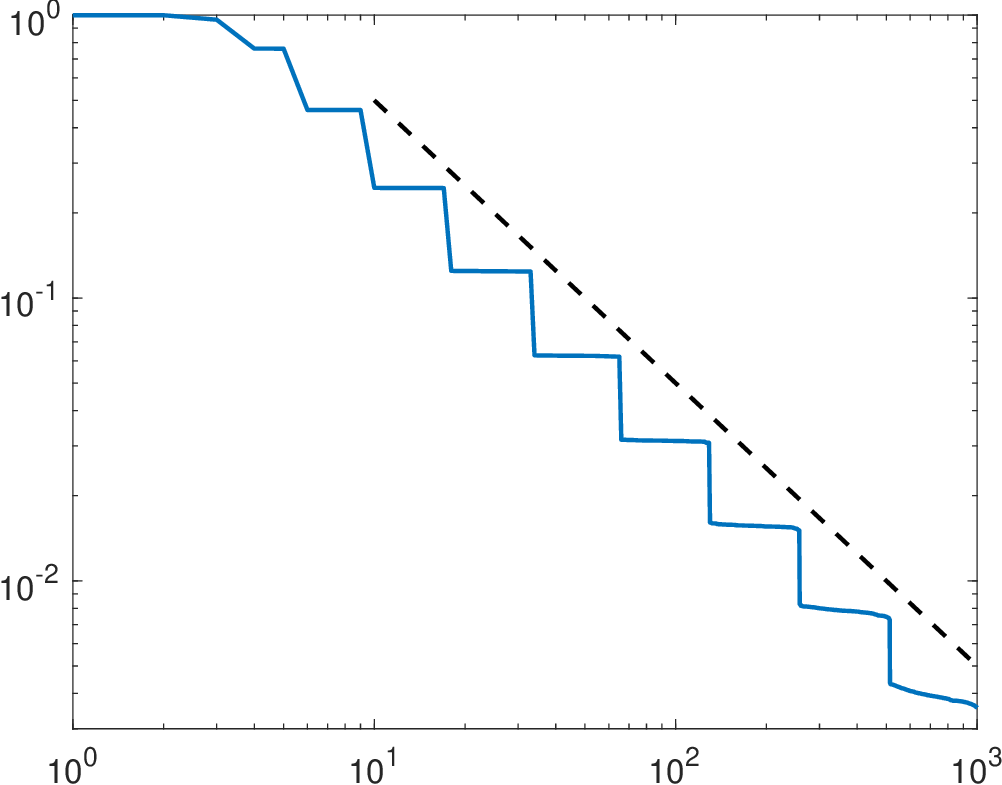}
    \put(65,45) {\rotatebox{-45}{$\mathcal{O}(n^{-1})$}}
    \put(-3,30) {\rotatebox{90}{$\|R_n\|_\infty$}}
    \put(50,-3) {$n$}
    \end{overpic}
  \end{minipage}
  \caption{Convergence results for the Cholesky algorithm with complete pivoting for $K(x,y)=\min\{x+1,y+1\}$ with $\Omega = [-1,1]$ (left) and the Mat\'{e}rn kernel with $\nu = 1/2$, $\ell = 1/2$, and $\Omega = [-1,1]$.}
  \label{fig:Convergence}
\end{figure}

\subsection{Our contributions}
In this paper, we prove error bounds for the pivoted Cholesky algorithm applied to a SPD kernel with Lipschitz continuity along the diagonal, i.e., there is some uniform constant $L>0$ such that
\begin{equation}
|K(x, x) - K(x, y)| \leq L \|x-y\|, \qquad \forall x,y\in\Omega. 
\label{eq:LipschitzAlongDiagonal}
\end{equation} 
This condition is implied by Lipschitz continuous, but is far weaker as it allows the kernel to be potentially discontinuous off the diagonal. The error bound depends on the so-called fill distance of the pivots, which is the radius of the largest empty hypersphere that can fit among the pivots, giving us a notion of how covered $\Omega$ is by the chosen points. If the first $n$ pivots are $Z_n = \left\{z_1,\ldots,z_n\right\}\subset\Omega$, then the fill distance is given by 
\begin{equation}
h_{Z_n} = \sup_{x\in\Omega} \min_{z\in Z_n} \|x-z\|.
\label{eq:fillDistance}
\end{equation}

\subsection*{Contribution 1} 
We prove the following error bound on $\|R_n\|_\infty$ for the Cholesky algorithm with any pivoting strategy (see~\Cref{thm:filldistancebound}): 
\[
\|R_n\|_{\infty} \leq  4L h_{Z_n}.
\]
Remarkably,~\cref{eq:LipschitzAlongDiagonal} is only a condition on the diagonal of $K$, so the kernel is allowed to be potentially discontinuous away from the diagonal. Once a particular pivoting strategy is selected for the Cholesky algorithm, we can study how $h_{Z_n}\rightarrow0$ as $n\rightarrow \infty$ to derive a rate of convergence from~\Cref{thm:filldistancebound}. 

\subsection*{Contribution 2} 
For the Cholesky algorithm with complete pivoting, we have the following convergence rate (see~\Cref{thm:convergencegeneral}): 
\[
\|R_n\|_{\infty} \leq \frac{8LR}{n^{1/d}-1} = \mathcal{O}(n^{-1/d}).
\] 
where $R$ is the radius of a ball containing $\Omega\subset \mathbb{R}^d$. In~\Cref{sec:generalsmooth}, we extend~\Cref{thm:convergencegeneral} to include $C^{1, 1}$ kernels (Lipschitz continuous first derivatives), yielding an improved convergence rate of $\|R_n\|_\infty = \mathcal{O}(n^{-2/d})$ with complete pivoting.

\subsection*{Contribution 3}
There are also matrix counterparts to~\Cref{thm:convergencegeneral}, which we find intriguing, giving us a decay rate for the Cholesky decomposition applied to structured matrices. 
Let $A$ be an $m\times m$ SPD matrix. Then, for $1<n\leq m$, we have (see~\Cref{cor:matrixversion})
\begin{equation*}
        \|A - A_n\|_{\max} \leq \frac{4(m-1)G_A}{n-1} = \mathcal{O}(n^{-1}), 
    \end{equation*}
    where $G_A$ is a discrete diagonal Lipschitz constant given by $G_A = \max_{i\neq j}|A_{i, i}-A_{i, j}|/|i-j|$. 

\subsection{Structure of the paper}

The remainder of the paper is organized as follows. In~\Cref{sec:proof} we present our main error bound on the Cholesky algorithm with any pivoting strategy for kernels satisfying~\cref{eq:LipschitzAlongDiagonal}, and then specialize this to complete pivoting to obtain a convergence rate in~\cref{sec:proofstep2}. In~\cref{sec:OtherPivotingStrategies}, we discuss other possible pivoting strategies. We improve the convergence rate for $C^{1,1}$ regularity and explain a barrier in our argument for further improvements in~\Cref{sec:generalsmooth}. In~\Cref{sec:connections} we explore the consequences of our results for matrices, Gaussian process regression, and P-greedy interpolation. Finally, we give brief concluding remarks in~\cref{sec:conclusion}.

\section{Error Bound on the Pivoted Cholesky Algorithm in Terms of Fill Distance}\label{sec:proof}
Initially, proving the convergence of the Cholesky algorithm with any pivoting strategy on nonsmooth kernels seems extremely challenging. Suppose one starts with a Lipschitz kernel with constant $L>0$, i.e., 
\[
|K(x_1,y_1) - K(x_2,y_2)| \leq L\left\| \begin{bmatrix}x_1\\y_1\end{bmatrix} - \begin{bmatrix}x_2\\y_2\end{bmatrix}\right\|,\qquad x_1,x_2,y_1,y_2\in\Omega. 
\] 
where $\|\cdot\|$ is the Euclidean norm in $\mathcal{R}^{2d}$. 
It is easy to convince oneself that this might be hopeless because, at a quick glance, it looks like the Lipschitz condition is not preserved by a Cholesky step. Since
\[
R_1(x,y) = K(x,y) - \frac{K(x,z_1)K(z_1,y)}{K(z_1,z_1)},
\]
where $z_1$ is the first pivot, you find that even with complete pivoting, one only has  
\[
\begin{aligned}
|R_1(x_1,y_1) - R_1(x_2,y_2)| &= \left|K(x_1,y_1) - \frac{K(x_1,z_1)K(z_1,y_1)}{K(z_1,z_1)} -K(x_2,y_2) + \frac{K(x_2,z_1)K(z_1,y_2)}{K(z_1,z_1)} \right|\\
&\leq L\left\| \begin{bmatrix}x_1\\y_1\end{bmatrix} - \begin{bmatrix}x_2\\y_2\end{bmatrix}\right\| + L\left\| \begin{bmatrix}z_1\\y_1\end{bmatrix} - \begin{bmatrix}z_1\\y_2\end{bmatrix}\right\|,
\end{aligned} 
\]
where in the last inequality we used the triangle inequality and that $|K(x_1,z_1)/K(z_1,z_1)|\leq 1$ and $|K(x_2,z_1)/K(z_1,z_1)|\leq 1$ for complete pivoting. This looks like bad news, as the Lipschitz condition is not preserved after one step. However, progress can be made for kernels that satisfy an even weaker condition, such as Lipschitz on the diagonal (see~\cref{eq:LipschitzAlongDiagonal}), by focusing on bounding the diagonal of $R_n$ near a pivot location. 

We start by proving that the diagonal of $R_1$ must grow slowly away from a pivot location $z_1$. The following lemma holds for the Cholesky algorithm with any pivoting strategy. 
\begin{lemma}\label{lem:lipschitzchange}
Let $K:\Omega\times \Omega\to\mathbb{R}$ be a continuous SPD kernel satisfying~\cref{eq:LipschitzAlongDiagonal} with constant $L>0$. Then, 
\begin{equation*}
    0\leq R_1(x, x) \leq 4L \|x - z_1\|, \qquad x\in\Omega,
\end{equation*} 
where $z_1$ is the first pivot selected by the Cholesky algorithm. 
\end{lemma}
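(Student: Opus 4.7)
The lower bound is immediate: $R_1$ is positive semidefinite as noted in the introduction, so $R_1(x,x) \geq 0$. For the upper bound, abbreviate $a := K(x,x)$, $b := K(z_1,z_1)$, $c := K(x,z_1)$, and $h := \|x - z_1\|$, where $b > 0$ since $K$ is SPD when evaluated on the singleton $\{z_1\}$. The target becomes $s := a - c^2/b \leq 4Lh$. The starting point is the algebraic identity
\[
s = \frac{ab - c^2}{b} = (a - c) + \frac{c}{b}(b - c),
\]
which follows from $ab - c^2 = b(a-c) + c(b-c)$. Applying~\cref{eq:LipschitzAlongDiagonal} twice gives $|a - c| = |K(x,x) - K(x,z_1)| \leq Lh$ and, using the symmetry $K(z_1,x) = K(x,z_1)$, $|b - c| = |K(z_1,z_1) - K(z_1,x)| \leq Lh$. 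Substituting these bounds into the identity produces
\[
s \leq Lh\bigl(1 + |c|/b\bigr).
\]

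What remains is to control the ratio $|c|/b$. Here I would invoke positive semidefiniteness of $K$ on the pair $\{x, z_1\}$, which gives the $2 \times 2$ Gram inequality $c^2 \leq ab$. A two-case split then closes the argument. If $|c| \leq 3b$, the inequality $s \leq 4Lh$ is immediate. Otherwise $|c| > 3b$ combined with $c^2 \leq ab$ forces $a > 9b$, so $|a - b| > 8b$; but the triangle inequality and the two Lipschitz bounds give $|a - b| \leq |a - c| + |c - b| \leq 2Lh$, so $b < Lh/4$ and $a < b + 2Lh < 9Lh/4$. Since $s \leq a$ (as $c^2/b \geq 0$), the inequality $s \leq 4Lh$ holds in this case as well.

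\noindent\textbf{Main obstacle.} The chief subtlety is precisely the ratio $|c|/b$: the Lipschitz-on-diagonal hypothesis~\cref{eq:LipschitzAlongDiagonal} only constrains additive differences between kernel values, whereas the Cholesky step divides by the pivot value $K(z_1,z_1)$, which could a priori be arbitrarily small relative to $|K(x,z_1)|$. The resolution is the observation that if $b$ is small enough to make the ratio dangerous, then $a$ itself is already of order $Lh$, so the quotient term can be dropped outright. This interplay between the Lipschitz estimate and the global positive semidefiniteness constraint is the heart of the argument, and the constant $4$ in the stated bound reflects the slack in how these two ingredients are combined.
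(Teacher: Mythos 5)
Your proof is correct and achieves the paper's constant $4$ by a genuinely different route. The paper first bounds $K(x,x) \le K(z_1,z_1) + 2L\|x-z_1\|$, substitutes the two-sided Lipschitz envelope for $K(x,z_1)$ into the subtracted quotient $K(x,z_1)^2/K(z_1,z_1)$, and splits on the sign of $K(z_1,z_1) - L\|x-z_1\|$, dropping the quotient entirely when the pivot value is small. Your decomposition $s = (a-c) + \tfrac{c}{b}(b-c)$ instead isolates the two Lipschitz-controlled differences at the outset and reduces everything to controlling the ratio $|c|/b$, which you resolve with the $2\times2$ Gram inequality $c^2 \le ab$ — a structural input the paper never invokes; correspondingly your case split is on $|c|$ versus $3b$ rather than on $b$ versus $L\|x-z_1\|$. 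Both proofs rest on the same underlying observation that a dangerously small pivot value $b$ forces $a = K(x,x)$ to be $\mathcal{O}(L\|x-z_1\|)$ already, but you reach it through positive semidefiniteness of the $2\times 2$ submatrix while the paper uses a direct threshold on $b$. Your identity arguably exposes the role of the Lipschitz hypothesis more cleanly; the paper's version is slightly more elementary. (Incidentally, the Gram step can be dispensed with: $|c| > 3b$ together with $|c-b| \le L\|x-z_1\|$ already forces $b < L\|x-z_1\|/2$, hence $a \le b + 2L\|x-z_1\| < 5L\|x-z_1\|/2 < 4L\|x-z_1\|$, and $s \le a$ finishes.)
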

\begin{proof}
Because $K$ is a positive definite kernel, so is $R_1$ and hence $R_1(x,x)\geq 0$ for all $x\in\Omega$.
Moreover, since $K$ satisfies~\cref{eq:LipschitzAlongDiagonal}, we have $K(x, x) \leq K(z_1, z_1) + 2L\|x-z_1\|$ for any $x\in\Omega$. Thus, we have 
\begin{equation}\label{eq:lemmadiagterm}
    R_1(x, x) = K(x, x) - \frac{K(x, z_1)^2}{K(z_1, z_1)} \leq K(z_1, z_1) + 2L\|x-z_1\| - \frac{K(x, z_1)^2}{K(z_1, z_1)}. 
\end{equation}
Furthermore, since $|K(z_1, z_1) - K(x,z_1)| \leq L \|x-z_1\|$ by~\cref{eq:LipschitzAlongDiagonal}, we have
\begin{equation*}
    K(z_1, z_1) - L\|x-z_1\| \leq K(x, z_1) \leq K(z_1, z_1) + L\|x-z_1\|. 
\end{equation*}
If $0\leq K(z_1, z_1)-L\|x-z_1\|$ then simply $-K(x, z_1)^2\leq -(K(z_1, z_1)- L\|x-z_1\|)^2$, and the right hand side of~\cref{eq:lemmadiagterm} is immediately bounded above by $4L\|x-z_1\|$ since we have
\begin{equation*}
    K(z_1, z_1) + 2L\|x-z_1\| - \frac{(K(z_1, z_1)-L\|x-z_1\|)^2}{K(z_1, z_1)} = 4L\|x-z_1\| - \frac{L^2\|x-z_1\|^2}{K(z_1, z_1)}\leq 4L \|x-z_1\|.
\end{equation*}
Otherwise, if $K(z_1, z_1)-L\|x-z_1\| < 0 $, then there exists $0<L'<L$ such that $K(z_1, z_1)-L'\|x-z_1\|=0$. In this case, the right hand side of~\cref{eq:lemmadiagterm} is bounded above by
\begin{equation*}
    K(z_1, z_1) + 2L\|x-z_1\| = (2L + L')\|x-z_1\| \leq 3L\|x-z_1\|.
\end{equation*}
Thus, in both cases, we find that $R_1(x, x) \leq 4 L \|x - z_1\|$ for any $x\in\Omega$ and the result follows. 
\end{proof}
\Cref{lem:lipschitzchange} tells us that along the diagonal, $R_1$ is not only zero at the pivot location, i.e., $R_1(z_1,z_1) = 0$, but it remains small in a neighborhood around it. In fact, along the diagonal, $R_1$ cannot grow faster than proportional to the distance away from the pivot. Surprisingly, this property continues to hold as one takes more Cholesky steps, regardless of the pivoting strategy. 
\begin{lemma}\label{lem:MultiStepLipschitzControl}
Let $K:\Omega\times \Omega\to\mathbb{R}$ be a SPD kernel satisfying~\cref{eq:LipschitzAlongDiagonal} with constant $L>0$. Then, the $n$th residual from the Cholesky algorithm with any pivoting strategy satisfies
\begin{equation*}
   0\leq R_n(x, x) \leq 4L \min_{1\leq i\leq n} \|x - z_i\|, \qquad x\in\Omega,
\end{equation*} 
where $z_i$ is the $i$th pivot.
\end{lemma}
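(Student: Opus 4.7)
The plan is to leverage two structural properties of the pivoted Cholesky algorithm---\emph{diagonal monotonicity} and \emph{order invariance}---and reduce the $n$-step bound at each pivot $z_i$ to the one-step estimate already established in~\Cref{lem:lipschitzchange}.

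First, the update rule gives $R_n(x,x) = R_{n-1}(x,x) - R_{n-1}(x,z_n)^2/R_{n-1}(z_n,z_n)$, so $0 \leq R_n(x,x) \leq R_{n-1}(x,x)$ for every $x \in \Omega$; iterating, $R_n(x,x) \leq R_m(x,x)$ whenever $m \leq n$. Next, I would verify that the Cholesky approximant $K_n$ coincides with the nodal interpolant of $K$ at the pivot set, namely
\[
K_n(x,y) = \mathbf{k}(x)^{\top} M^{-1} \mathbf{k}(y), \qquad \mathbf{k}(x) = \bigl(K(x,z_1),\ldots,K(x,z_n)\bigr)^{\top},\quad M = \bigl[K(z_j,z_k)\bigr]_{j,k=1}^n,
\]
which is manifestly symmetric in the pivots. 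This identity is proved inductively from~\cref{eq:CholeskyAlgorithm}, using the interpolation property $K_n(z_i,\cdot) = K(z_i,\cdot)$ for $i \leq n$ (so $\mathbf{k}(z_i)^{\top} M^{-1}$ is the $i$-th standard basis vector). Consequently, $R_n = K - K_n$ depends only on the \emph{set} $\{z_1,\ldots,z_n\}$ and not on the order in which its elements are processed.

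With these two ingredients, fix any $i \in \{1,\ldots,n\}$ and run the Cholesky algorithm on $K$ with an alternative ordering of the same pivots in which $z_i$ is processed first. By~\Cref{lem:lipschitzchange} applied to the original kernel $K$, the resulting one-step residual $\tilde{R}_1$ satisfies $\tilde{R}_1(x,x) \leq 4L\|x - z_i\|$ for all $x \in \Omega$. Processing the remaining $n-1$ pivots in any order yields an $n$-step residual $\tilde{R}_n$, and diagonal monotonicity gives $\tilde{R}_n(x,x) \leq \tilde{R}_1(x,x) \leq 4L\|x - z_i\|$. Order invariance forces $\tilde{R}_n = R_n$, so $R_n(x,x) \leq 4L\|x - z_i\|$ for every $i \leq n$, and taking the minimum over $i$ delivers the claimed upper bound. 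The lower bound $R_n(x,x) \geq 0$ is just positive semi-definiteness of $R_n$.

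The main obstacle I anticipate is the hazard of the naive induction $R_{n-1} \Rightarrow R_n$: to invoke~\Cref{lem:lipschitzchange} on the update from $R_{n-1}$ to $R_n$ (with pivot $z_n$), one would need $R_{n-1}$ itself to satisfy the diagonal Lipschitz condition~\cref{eq:LipschitzAlongDiagonal} with constant $L$, and there is no obvious reason the residual inherits such a bound. The order-invariance trick sidesteps this entirely by always routing the analysis back to a \emph{first} Cholesky step on the original kernel $K$, where~\Cref{lem:lipschitzchange} applies verbatim.
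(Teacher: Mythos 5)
Your proof is correct and follows essentially the same route as the paper: reorder the pivots so that the nearest one $z_i$ is processed first, apply \Cref{lem:lipschitzchange} to bound $\tilde{R}_1(x,x)\le 4L\|x-z_i\|$, and then use diagonal monotonicity of Cholesky steps together with order-invariance of the residual to conclude. The one difference is that you take care to justify order-invariance via the Nystr\"{o}m/Schur-complement identity $K_n(x,y)=\mathbf{k}(x)^\top M^{-1}\mathbf{k}(y)$, whereas the paper simply asserts this fact; your added detail is correct and makes the argument more self-contained.
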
 
\begin{proof} 
Since $K$ is positive definite, $R_n$ is positive semidefinite and thus, $R_n(x,x)\geq 0$ for all $x\in\Omega$. 
Select the pivot location closest to $x$, say $z_i$. Since the residual $R_n$ constructed by a pivoted Cholesky algorithm is invariant to the order of the pivots, we obtain the same residual $R_n$ if we perform Cholesky steps with pivots in the order of $z_i, z_1, \dots, z_{i-1}, z_{i+1}, \dots, z_n$, instead of $z_1, \dots, z_n$. Let $\tilde{R}_1$ be the residual after performing a Cholesky step on $K$ with the pivot $z_i$ first. Then, by~\Cref{lem:lipschitzchange}, we have
    \begin{equation*}
 \tilde{R}_1(x, x) \leq 4L \|x-z_i\|,\qquad x\in\Omega,
    \end{equation*}
    since $\tilde{R}_1(z_i, z_i) = 0$. Performing the remaining $n-1$ Cholesky steps on $\tilde{R}_1$ with pivots $z_1, \dots, z_{i-1}$, $z_{i+1}, \dots, z_n$ we obtain $R_n$. Since the diagonal of the residual is nonincreasing under Cholesky steps, i.e., $R_{k+1}(x,x)\leq R_{k}(x,x)$ for all $x\in\Omega$, we find that $R_n(x, x)\leq \tilde{R}_1(x, x)$ for any $x\in\Omega$.
\end{proof} 

In particular,~\Cref{lem:MultiStepLipschitzControl} tells us that near any pivot location, $R_n$ is small. Or, said more precisely, for any $\epsilon>0$, we know that $R_n(x,x)\leq \epsilon$ provided $\min_{1\leq i\leq n} \|x - z_i\|\leq \epsilon/(4L)$. Once the diagonal of $R_n$ is controlled, one can bound $\|R_n\|_\infty$. We can now prove a bound on $\|R_n\|_\infty$ in terms of the fill distance of the pivots (see~\Cref{thm:filldistancebound}). 

\begin{theorem}\label{thm:filldistancebound}
    Let $K:\Omega\times\Omega \rightarrow\mathbb{R}$ be a SPD kernel such that~\cref{eq:LipschitzAlongDiagonal} holds for some constant $L>0$. Then, the $n$th residual $R_n$ of the pivoted Cholesky algorithm satisfies
\[
        \|R_n\|_{\infty} \leq  4L h_{Z_n},
\]
    where $Z_n=\{z_1, \dots, z_n\}$ is the set of pivots and $h_{Z_n}$ is defined in~\cref{eq:fillDistance}. 
\end{theorem}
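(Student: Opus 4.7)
The plan is to reduce the bound on the full kernel norm $\|R_n\|_\infty$ to a bound on its diagonal, which has already been controlled by~\Cref{lem:MultiStepLipschitzControl}. The off-diagonal-to-diagonal reduction is the standard Cauchy--Schwarz inequality for positive semidefinite kernels, and the fill-distance conversion is then a one-line consequence of the definition in~\cref{eq:fillDistance}.

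First, I would invoke positive semidefiniteness of $R_n$ (inherited from $K$ via the Cholesky update formula~\cref{eq:CholeskyAlgorithm}) to deduce that for every $x,y\in\Omega$,
\[
|R_n(x,y)|^2 \le R_n(x,x)\,R_n(y,y),
\]
since the $2\times 2$ Gram-type matrix with entries $R_n(x,x), R_n(x,y), R_n(y,x), R_n(y,y)$ is itself positive semidefinite. Taking a supremum over $x,y$ then gives $\|R_n\|_\infty \le \sup_{x\in\Omega} R_n(x,x)$; in other words, the maximum of the positive semidefinite kernel $R_n$ is attained on the diagonal.

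Second, I would apply~\Cref{lem:MultiStepLipschitzControl} to the right-hand side. That lemma gives
\[
R_n(x,x) \le 4L \min_{1 \le i \le n}\|x-z_i\|,\qquad x\in\Omega.
\]
Taking the supremum over $x\in\Omega$ and using the definition of fill distance,
\[
\sup_{x\in\Omega}\,\min_{1 \le i \le n}\|x-z_i\| = h_{Z_n},
\]
we conclude $\|R_n\|_\infty \le \sup_{x\in\Omega} R_n(x,x) \le 4L h_{Z_n}$, as desired.

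There is essentially no obstacle here: the two nontrivial ingredients (the Lipschitz-on-the-diagonal control of $R_n(x,x)$ and positive semidefiniteness of $R_n$) have already been established earlier in the section. The only subtlety worth flagging is that the Cauchy--Schwarz step genuinely requires $R_n$ to be positive semidefinite as a kernel, not merely to have a nonnegative diagonal; this is what allows the off-diagonal values to be dominated by the diagonal and makes the fill-distance bound on $R_n(x,x)$ automatically upgrade to a uniform bound on all of $\Omega\times\Omega$.
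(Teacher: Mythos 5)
Your proof is correct and follows essentially the same route as the paper's: bound the diagonal $R_n(x,x)$ via \Cref{lem:MultiStepLipschitzControl} and the definition of fill distance, then upgrade to a uniform bound on $\Omega\times\Omega$ by Cauchy--Schwarz for the positive semidefinite kernel $R_n$. The only difference is cosmetic (you state the diagonal-dominance reduction first, the paper states it last).
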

\begin{proof}
Let $Z_n = \{z_1,\ldots,z_n\}$ be the pivots selected by Cholesky. Since $R_n(z_i,z_i) = 0$ for $1\leq i\leq n$, we know from~\Cref{lem:MultiStepLipschitzControl} that 
\[
R_n(x,x)\leq 4L \min_{z_i\in Z_n} \|x - z_i\|\leq 4L h_{Z_n}, \qquad x\in\Omega, 
\]
where the last inequality follows directly from the definition of $h_{Z_n}$ (see~\cref{eq:fillDistance}).  Finally, since $R_n$ is itself a SPD kernel, we know that\footnote{This follows because the $2\times 2$ matrix $\begin{bmatrix} R(x,x) & R(x,y) \\ R(y,x) & R(y,y)\end{bmatrix}$ is positive definite.}
\[
|R_n(x,y)| \leq \sqrt{R_n(x,x)R_n(y,y)} \leq 4L h_{Z_n}, \qquad x,y\in\Omega,
\]
as desired.
\end{proof}

Just because $\|R_n\|_\infty\leq 4Lh_{Z_n}$ does not necessarily imply that $\|R_n\|_{\infty}$ to 0 as $n\to\infty$, since the fill distance $h_{Z_n}$ need not shrink.   In fact, it may be plausible that the pivot points could accumulate in such a way that the fill distance is not reduced as $n\rightarrow \infty$. One pivoting strategy one could employ to minimize the fill distance is uniform pivoting, where the pivot locations are predetermined and selected based on $\Omega$, not $K$. One can select a set of (quasi-)uniform points in $\Omega$ as the pivot locations. If $\Omega=[0,1]^d$ and $Z_n$ is taken as a tensor grid with $m$ points per coordinate (so $n=m^d$), then every $x\in[0,1]^d$ lies within Euclidean distance at most $\frac{\sqrt d}{2m}$ of some grid point. Hence
\[
h_{Z_n}\;\le\;\frac{\sqrt d}{2m}\;=\;\frac{\sqrt d}{2}\,n^{-1/d},
\]
and by \Cref{thm:filldistancebound} we obtain the explicit bound
\begin{equation*}
    \|R_n\|_{\infty}\;\le\;4L\,h_{Z_n}\;\le\;2L\,\sqrt d\,n^{-1/d}.
\end{equation*}
More generally, for any quasi-uniform set $Z_n$ with bounded mesh ratio, one has $h_{Z_n}\le C_d\,n^{-1/d}$, yielding $\|R_n\|_{\infty}\le 4L\,C_d\,n^{-1/d}$ with a constant $C_d$ depending only on $d$ (and the mesh ratio). 

The downside to uniform pivoting is that it does not exploit the properties of $K$, and its empirical convergence rate does not always closely match the decay rate of the kernel's singular values. In the next section, we show that the Cholesky algorithm with complete pivoting also ensures that the fill distance goes to $0$ while also converging much faster in practice.

\section{On the convergence of the Cholesky algorithm with complete pivoting}\label{sec:proofstep2}
Now, we focus on the Cholesky algorithm with complete pivoting. Since complete pivoting always selects the next pivot as the absolute maximum of the residual, one can show that two pivots cannot be close unless the residual is already small. (To get a sense of the pivot locations selected by the Cholesky algorithm with complete pivoting, see~\Cref{fig:pretty}.)

\begin{figure} 
\begin{minipage}{.32\textwidth}
\includegraphics[width=\textwidth]{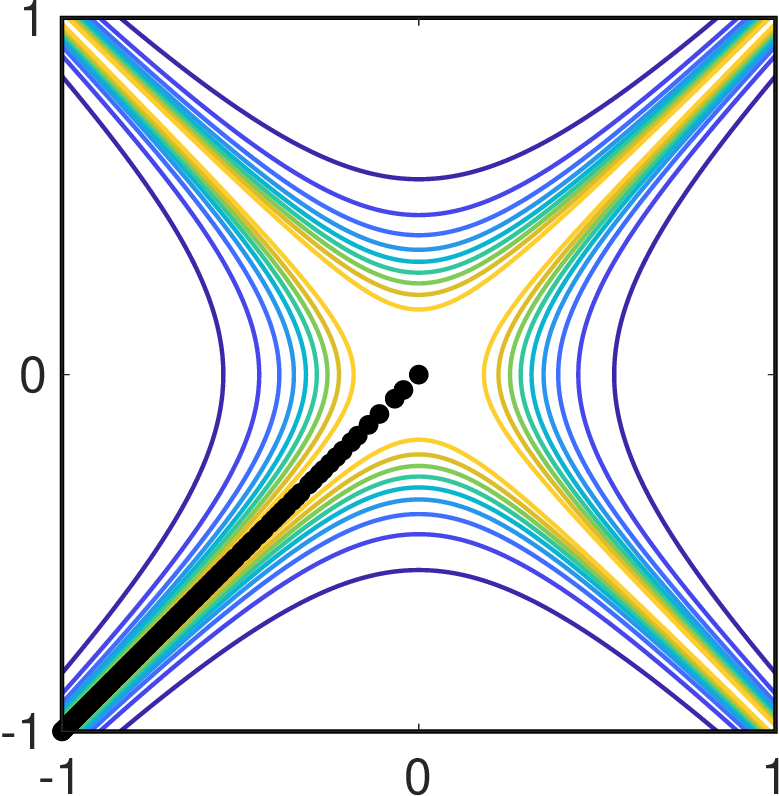}
\end{minipage} 
\begin{minipage}{.32\textwidth}
\includegraphics[width=\textwidth]{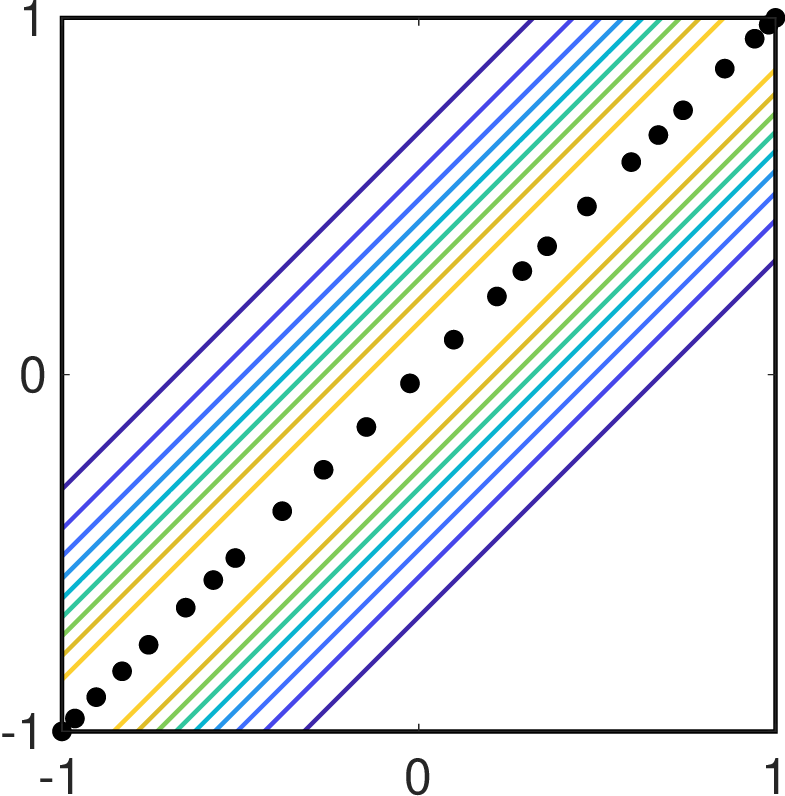}
\end{minipage} 
\begin{minipage}{.32\textwidth}
\includegraphics[width=\textwidth]{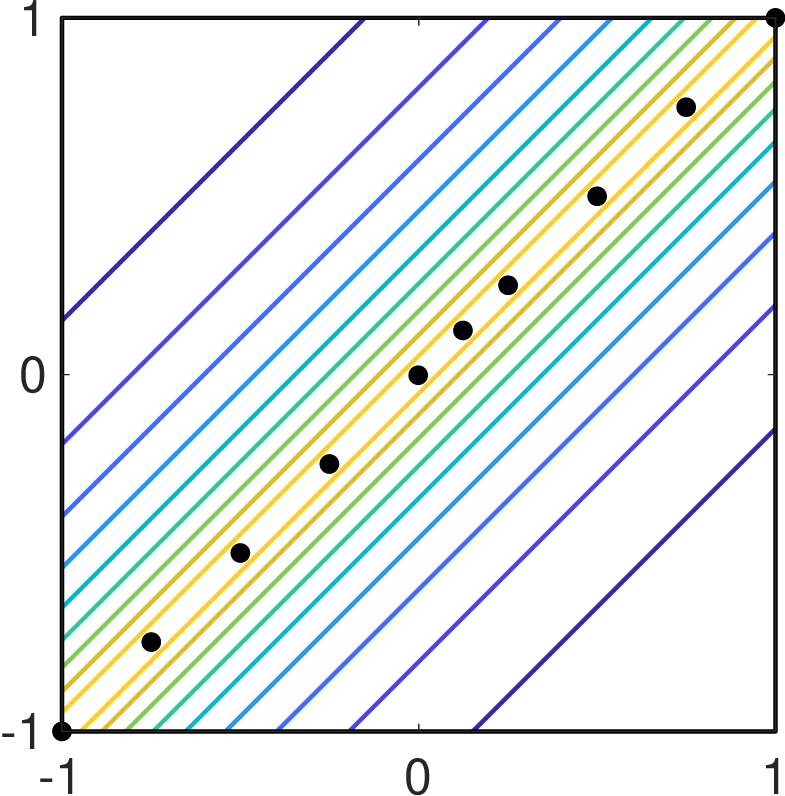}
\end{minipage} 
\begin{minipage}{.32\textwidth}
\includegraphics[width=\textwidth]{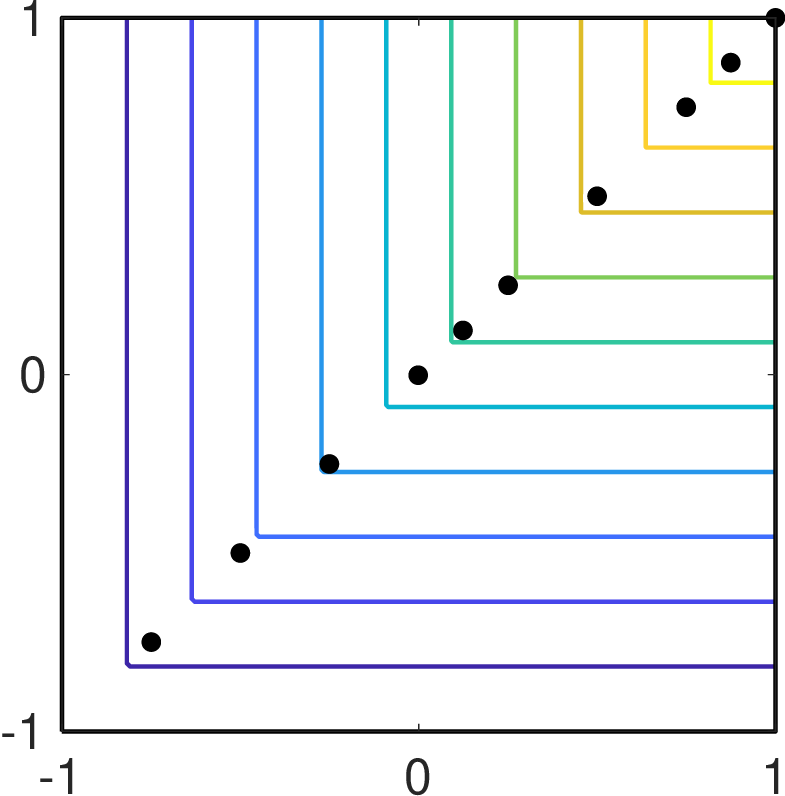}
\end{minipage} 
\begin{minipage}{.32\textwidth}
\includegraphics[width=\textwidth]{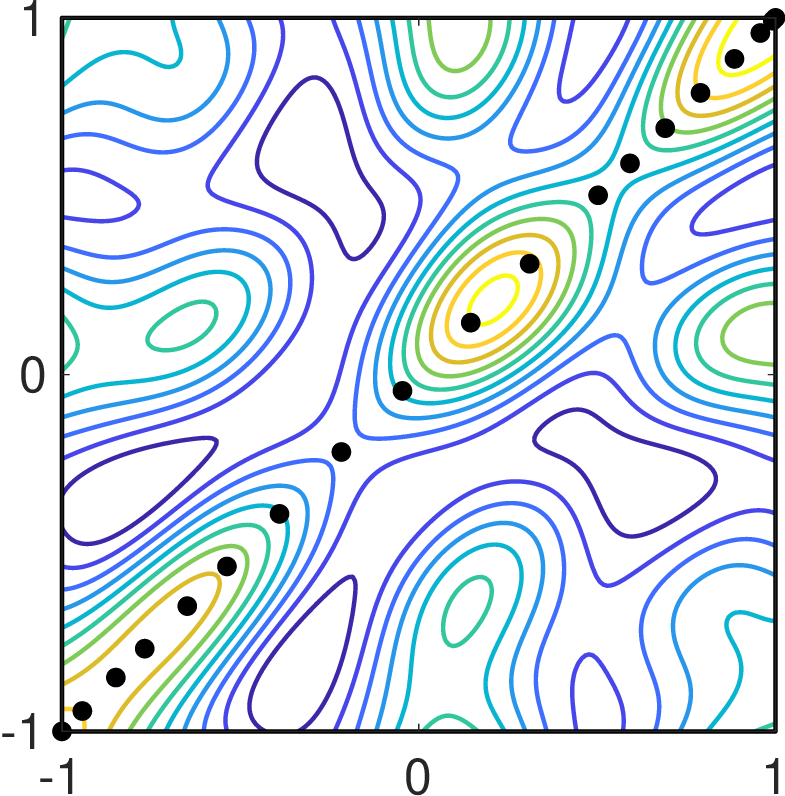}
\end{minipage} 
\begin{minipage}{.32\textwidth}
\includegraphics[width=\textwidth]{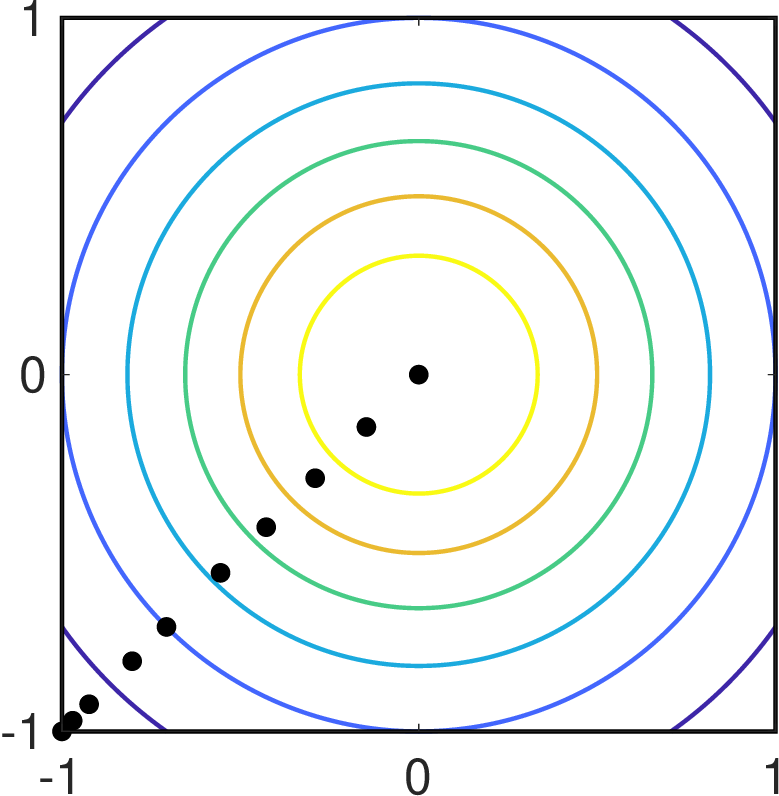}
\end{minipage} 
\caption{Six contour plots of kernels with $\Omega = [-1,1]$, showing the pivot location (black dots) of the Cholesky algorithm with complete pivoting until residual is below machine precision. (Top, left) $K(x,y) = 1/(1+100(x^2-y^2)^2)$ (Top, middle) $K(x,y) = e^{-5(x-y)^2}$, (Top, right) Mat\'{e}rn kernel with $\nu = 1/2$ and $\ell =1/2$ (only showing first 10 pivots), (Bottom, left) $K(x,y) = \min\{x+1,y+1\}$ (only showing first 10 pivots), (Bottom, middle) Kernel from~\cite[Fig.~4.10 (right)]{townsend2014computing}, and (Bottom, right) $K(x,y) = 1/(1 + (x^2 + y^2))$.}
\label{fig:pretty}
\end{figure}

\begin{lemma}\label{prop:pivotdistance}
    Let $K:\Omega\times \Omega\to\mathbb{R}$ be a SPD kernel satisfying~\cref{eq:LipschitzAlongDiagonal}. The residual of the Cholesky algorithm with complete pivoting satisfies 
    \[
    \|R_n\|_\infty \leq 4L \min_{1\leq i,j\leq n} \|z_i - z_j\|,
    \]
    where $z_1,\ldots,z_n$ are the Cholesky pivots. 
\end{lemma}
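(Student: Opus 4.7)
The plan is to chain three ingredients already in hand: (i) for a positive semidefinite kernel $R$, one has $\|R\|_\infty = \max_{x\in\Omega} R(x,x)$ (the same Cauchy--Schwarz-for-kernels fact used in the proof of \Cref{thm:filldistancebound}); (ii) complete pivoting selects $z_n = \arg\max_{x\in\Omega} R_{n-1}(x,x)$, so the $\infty$-norm of $R_{n-1}$ is realized at the next pivot; and (iii) \Cref{lem:MultiStepLipschitzControl} already bounds the diagonal of any residual pointwise by the distance to the nearest prior pivot.

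The key step is then essentially a one-liner. Applying \Cref{lem:MultiStepLipschitzControl} to $R_{n-1}$ (whose vanishing pivots are $z_1,\dots,z_{n-1}$) at the point $x = z_n$ gives
\[
\|R_{n-1}\|_\infty \;=\; R_{n-1}(z_n, z_n) \;\leq\; 4L \min_{1 \leq i \leq n-1} \|z_n - z_i\|.
\]
Since the diagonal is nonincreasing under Cholesky steps, $\|R_n\|_\infty \leq \|R_{n-1}\|_\infty$, and the same argument applied at every earlier stage yields $\|R_n\|_\infty \leq \|R_{j-1}\|_\infty \leq 4L \min_{1 \leq i < j} \|z_j - z_i\|$ for every $2 \leq j \leq n$. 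Taking the minimum over $j$ delivers the stated bound (with the understanding that the minimum in the statement is over distinct indices, which by symmetry of $\|\cdot\|$ equals the minimum over $i<j$).

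There is no real obstacle here: all the heavy lifting was already done in \Cref{lem:MultiStepLipschitzControl}. The only conceptual point to highlight is that complete pivoting is precisely the ingredient that converts \Cref{lem:MultiStepLipschitzControl} — a pointwise diagonal bound stated in terms of distance from $x$ to a prior pivot — into a global bound on $\|R_n\|_\infty$ stated in terms of distances between pivots. Without complete pivoting, one cannot identify where the diagonal attains its maximum, so the final pivot-pair-symmetric form of the inequality would fail; this is what makes the result specific to complete pivoting even though \Cref{lem:MultiStepLipschitzControl} is pivoting-agnostic.
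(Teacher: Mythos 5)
Your proof is correct and takes essentially the same route as the paper: both rest on \Cref{lem:MultiStepLipschitzControl} evaluated at $x=z_j$, the complete-pivoting identity $\|R_{j-1}\|_\infty = R_{j-1}(z_j,z_j)$, and monotonicity of $\|R_k\|_\infty$. The only cosmetic difference is that the paper singles out the closest pivot pair $(z_i,z_j)$ up front and applies the lemma once at step $j-1$, whereas you apply it at every step and take the minimum over $j$ at the end — the two organizations are equivalent.
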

\begin{proof}
    Let $z_i$ and $z_j$ be the closest two pivots with $i<j$. From~\Cref{lem:MultiStepLipschitzControl} with $n = j-1$, we see that 
    \[
    \|R_{j-1}\|_\infty = |R_{j-1}(z_j,z_j)| \leq 4L \|z_j - z_i\|
    \]
    where the first equality comes from the fact that the pivots are selected according to complete pivoting. The result follows because $\|R_0\|_\infty \geq \|R_1\|_\infty \geq \|R_2\|_\infty\geq \cdots$.
\end{proof}

Given~\Cref{prop:pivotdistance} and a given number of pivots $n$, all we have to do is find a number $r$ such that there are always two pivots $z_i, z_j$ with $i, j\leq n$ such that $\|z_i - z_j\| \leq r$. 

\begin{theorem}\label{thm:convergencegeneral}
    Let $\Omega \subset \mathbb{R}^d$ be a compact domain contained in a ball of radius $R$. Let $K:\Omega\times \Omega\to\mathbb{R}$ be a kernel that satisfies the assumptions of~\Cref{thm:filldistancebound}. Then, the $n$th residual $R_n$ of the Cholesky algorithm with complete pivoting satisfies $(n>1)$
    \begin{equation}\label{eq:generalbound}
        \|R_n\|_{\infty} \leq \frac{8LR}{n^{1/d}-1} = \mathcal{O}(n^{-1/d}). 
    \end{equation}
\end{theorem}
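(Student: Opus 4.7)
The plan is to combine \Cref{prop:pivotdistance} with a standard sphere-packing argument in $\mathbb{R}^d$. By \Cref{prop:pivotdistance}, it suffices to show that among any $n$ pivots $z_1, \ldots, z_n$ contained in $\Omega$ (hence in a ball of radius $R$), there exist two of them at distance at most $\frac{2R}{n^{1/d}-1}$. Plugging this pairwise-distance bound into \Cref{prop:pivotdistance} immediately yields the desired $\frac{8LR}{n^{1/d}-1}$ estimate.

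First I would set $r := \min_{i\neq j} \|z_i - z_j\|$ and argue by contraposition: suppose the pivots are pairwise separated by more than some value $r$. Then the open Euclidean balls of radius $r/2$ centered at the $z_i$ are mutually disjoint. Since each $z_i$ lies in a ball of radius $R$ (say $B_R$), each small ball $B(z_i, r/2)$ is contained in the enlarged ball $B_{R + r/2}$ with the same center.

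Next I would apply a volume comparison. Disjointness and containment give
\begin{equation*}
    n \cdot V_d \left(\tfrac{r}{2}\right)^d \;\le\; V_d \left(R + \tfrac{r}{2}\right)^d,
\end{equation*}
where $V_d$ denotes the volume of the unit ball in $\mathbb{R}^d$. Taking $d$th roots and rearranging yields $r \le \frac{2R}{n^{1/d}-1}$, so the actual minimum pairwise distance among the pivots satisfies this same inequality.

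Combining this with \Cref{prop:pivotdistance} gives $\|R_n\|_\infty \le 4L \cdot \frac{2R}{n^{1/d}-1} = \frac{8LR}{n^{1/d}-1}$, as desired. No step looks like a serious obstacle: the reduction to a geometric packing problem is already done by \Cref{prop:pivotdistance}, and the packing bound is a one-line volume argument. The only small care point is to enlarge the enclosing ball by $r/2$ so that the disjoint small balls truly fit inside; without this enlargement one would get a weaker $n^{1/d}$ in the denominator instead of $n^{1/d}-1$.
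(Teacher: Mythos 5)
Your proof is correct and follows essentially the same route as the paper: reduce to a bound on the minimum pairwise pivot distance via \Cref{prop:pivotdistance}, then apply a sphere-packing/volume-comparison argument with balls of radius $r/2$ centered at the pivots inside the enlarged ball $B_{R+r/2}$. The only cosmetic difference is that the paper fixes $r = 2R/(n^{1/d}-1)$ up front and derives a contradiction from disjointness, whereas you set $r$ to be the actual minimum pairwise distance and solve the volume inequality for $r$; these are the same calculation.
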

\begin{proof}
    For $n=1$, from \Cref{lem:MultiStepLipschitzControl}, we directly have $\|R_1\|_\infty \leq 4L \max_{x, y\in\Omega}\|x-y\| \leq 8LR$. For $n>1$, let $r = 2R/(n^{1/d}-1)$. This $r$ is designed to satisfy the following identity exactly:
    \begin{equation*}
        n = \left(1 + \frac{2R}{r}\right)^d = \frac{\operatorname{vol}(B_{R + r/2})}{\operatorname{vol}(B_{r/2})}, 
    \end{equation*}
    where $B_r$ is the closed ball in $\mathbb{R}^d$ of radius $r$ and $\operatorname{vol}$ is its volume. Denote the first $n$ pivots of Cholesky as $z_1, \dots, z_n$ and consider $n$ closed balls of radius $\frac{r}{2}$ centered at $z_1, \dots, z_n$. It is impossible for all of these $n$ balls to be disjoint, as otherwise, 
        \begin{equation*}
        n \operatorname{vol}(B_{r/2}) < \operatorname{vol}(B_{R+\frac{r}{2}}),
    \end{equation*}
    which contradicts the fact that $n = \left(1 + \frac{2R}{r}\right)^d$. Therefore, we must have $\min_{1\leq i,j\leq n}\|z_i - z_j\|\leq r$. The bound on $\|R_n\|_\infty$ in~\cref{eq:generalbound} follows from \Cref{prop:pivotdistance}.
\end{proof}

This result shows that there is a general convergence guarantee for the Cholesky algorithm with complete pivoting when applied to kernels that are only Lipschitz along the diagonal. In particular, \Cref{thm:convergencegeneral} shows that complete pivoting necessarily drives the residual $\|R_n\|_\infty$ to zero at a dimension-dependent algebraic rate $\mathcal{O}(n^{-1/d})$, regardless of the geometry of $\Omega$. This confirms that the pivots cannot accumulate, and the pivot set generated by complete pivoting always has shrinking fill distance. So, even under very weak regularity assumptions on $K$, the Cholesky algorithm with complete pivoting provides a systematic approximation procedure with provable convergence. In particular, implicit in the proof of~\Cref{thm:convergencegeneral} is the fact that the first $n$ pivots selected by the Cholesky algorithm with complete pivoting on kernels with a diagonal Lipschitz condition (see~\cref{eq:LipschitzAlongDiagonal}) satisfy 
\[
h_{Z_n} \leq \frac{2R}{n^{1/d} - 1}.
\]
where $\Omega$ is contained in a ball of radius $R$. In~\Cref{fig:ConvergenceDiffD}, we demonstrate this convergence rate for the Cholesky algorithm with complete pivoting for $d = 1, 2, 3$ with the Mat\'{e}rn kernel with $\nu=1/2$, $\ell = 1/2$, and $\Omega = [-1,1]^d$. 

\begin{figure} 
\begin{minipage}{.49\textwidth}
\begin{overpic}[width=\textwidth]{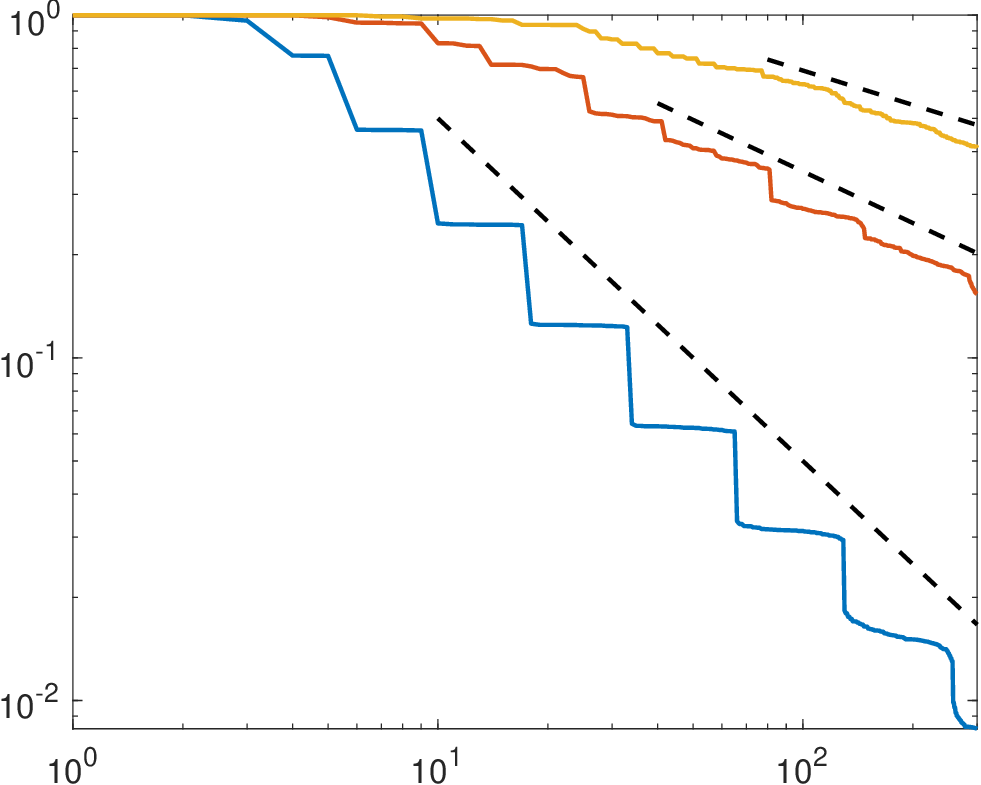}
 \put(70,47) {\rotatebox{-45}{$\mathcal{O}(n^{-1})$}}
  \put(75,67) {\rotatebox{-23}{$\mathcal{O}(n^{-1/2})$}}
   \put(80,75) {\rotatebox{-16}{$\mathcal{O}(n^{-1/3})$}}
    \put(-5,35) {\rotatebox{90}{$\|R_n\|_\infty$}}
  \put(60,35) {\color{matlabblue}\rotatebox{-45}{$d = 1$}}
  \put(75,57) {\color{matlabred}\rotatebox{-23}{$d = 2$}}
   \put(61,72.5) {\color{matlabyellow}\rotatebox{-17}{$d = 3$}}
    \put(50,-3) {$n$}
\end{overpic}
\end{minipage}
\caption{The convergence of the Cholesky algorithm with complete pivoting for the Mat\'{e}rn kernel with $\nu = 1/2$, $\ell = 1/2$, and $\Omega = [-1,1]^d$ for $d = 1$ (blue), $d=2$ (red), and $d=3$ (yellow). The convergence rate matches the $\mathcal{O}(n^{-1/d})$ rate predicted by~\Cref{thm:convergencegeneral}.}
\label{fig:ConvergenceDiffD}
\end{figure} 
\section{Other pivoting strategies}\label{sec:OtherPivotingStrategies}
The Cholesky algorithm can be employed with many different pivoting strategies. In this section, we briefly consider other pivoting strategies such as: (1) local maximum volume pivoting (see~\cref{sec:LocalMaxVol}), (2) complete pivoting with mistakes (see~\cref{sec:completePivotingWithMistakes}), and (3) discrete pivoting (see~\cref{sec:discretepivoting}).

\subsection{Global and local maximum volume pivoting}\label{sec:LocalMaxVol}

Global and local maximum volume pivoting are classical strategies in numerical linear algebra for selecting pivots in rank-revealing factorizations~\cite{damle2024reveal,gu1996efficient}. Given a set of previously chosen pivots, the idea is to select the next pivot to maximize the determinant (i.e., the volume) of the corresponding pivot submatrix. That is, the first $k$ pivots $Z_{k} = \left\{z_1,\ldots,z_k\right\}$ for global maximum volume pivoting are chosen so that
   \[
    \det\bigl(K[Z_k,Z_k]\bigr) \geq \det\bigl(K[B_k,B_k]\bigr)
    \]
    where $K[Z_k,Z_k]$ is the $k\times k$ submatrix of $K$ obtained by evaluating the kernel at $Z_k$ forming a matrix with entries $K(z_i,z_j)$ and $B_k$ is any collection of $k$ points in $\Omega$. Despite its beautiful theoretical properties, global maximum volume is usually not computationally feasible~\cite{civril2009selecting}. Local maximum volume is a more feasible alternative, where the first $k$ pivots $Z_{k+1}$ are selected so that 
    \[
    \det\bigl(K[Z_{k},Z_{k}]\bigr) \;\geq\; \det\bigl(K[\left(Z_{k}\setminus z\right)\cup\{y\},\left(Z_{k}\setminus z\right)\cup\{y\}]\bigr),
    \]
    for all $z\in Z_{k}$ and all $y\in \Omega \setminus Z_k$. That is, the next pivot is chosen so that replacing any one pivot with the new candidate would not increase the volume of the selected submatrix. Both global and local maximum volume pivoting are non-greedy pivot selection criteria. The same error bound as in~\Cref{thm:convergencegeneral} holds for the Cholesky algorithm with global or local maximum volume pivoting. 

\begin{corollary}
    With the same assumptions as \Cref{thm:filldistancebound}, the $n$th residual $R_n$ of the Cholesky algorithm with global or local maximum volume pivoting satisfies
    \begin{equation*}
        \|R_n\|_\infty \leq \frac{8LR}{n^{1/d}-1}.
    \end{equation*}
    \label{cor:maxvol}
\end{corollary}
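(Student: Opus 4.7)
I will mirror the two-part structure of the proof of \Cref{thm:convergencegeneral}: (i) establish that $\|R_n\|_\infty \le 4L\min_{i\neq j}\|z_i - z_j\|$ for any pivot set produced by maximum volume pivoting (the analog of \Cref{prop:pivotdistance}), and (ii) invoke the volume packing estimate $\min_{i\neq j}\|z_i-z_j\|\le 2R/(n^{1/d}-1)$, which is purely geometric and oblivious to the selection rule and therefore transfers verbatim. Since global max volume implies local max volume, it suffices to handle the local case.

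\textbf{Translating the volume condition into a residual condition.} The classical determinant identity
\[
\det K[Z_k,Z_k] = \prod_{i=1}^{k} R_{i-1}(z_i,z_i)
\]
holds for \emph{any} ordering of the pivots because, as already exploited in \Cref{lem:MultiStepLipschitzControl}, the final Cholesky residual is permutation-invariant. Reordering $Z_k$ so that a fixed $z\in Z_k$ appears last, and letting $\tilde R$ denote the residual of Cholesky with the $k-1$ pivots $Z_k\setminus\{z\}$, this identity rewrites the local max volume inequality as $\tilde R(z,z)\ge \tilde R(y,y)$ for every $y\in\Omega$. In other words, once any chosen pivot $z$ is ``peeled off'', $z$ automatically realizes the complete-pivoting maximum of the corresponding residual.

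\textbf{Assembly.} Let $z_a,z_b\in Z_n$ be a closest pair and let $\tilde R$ be the residual of Cholesky with the $n-1$ pivots $Z_n\setminus\{z_b\}$. Applying the translated condition with $z=z_b$ gives $\tilde R(z_b,z_b)=\max_x \tilde R(x,x)=\|\tilde R\|_\infty$, using that an SPD kernel attains its maximum on the diagonal. Because $z_a$ is among the $n-1$ pivots used to form $\tilde R$, \Cref{lem:MultiStepLipschitzControl} yields $\tilde R(z_b,z_b)\le 4L\|z_a-z_b\|$. Since $R_n$ is obtained from $\tilde R$ by one more Cholesky step (at $z_b$) and diagonals are non-increasing under Cholesky steps, $\|R_n\|_\infty \le \|\tilde R\|_\infty \le 4L\|z_a-z_b\|$, and the packing bound then finishes the proof. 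The only delicate point is the determinant-residual bridge, which hinges entirely on the order-invariance of Cholesky; once that is granted, the rest is a direct port of the complete-pivoting argument.
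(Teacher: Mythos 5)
Your proposal is correct and follows essentially the same route as the paper: remove one member of a closest pivot pair, use the Schur-complement/determinant identity to show the max-volume condition forces the removed pivot to maximize the diagonal of the $(n-1)$-pivot residual, then apply \Cref{lem:MultiStepLipschitzControl} and the geometric packing bound. The only cosmetic difference is that you derive the key determinant identity via the product formula $\det K[Z_k,Z_k]=\prod_i R_{i-1}(z_i,z_i)$ and reordering, whereas the paper invokes the block determinant formula directly; these are equivalent.
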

\begin{proof}
    We will once more prove that the residual satisfies the bound $\|R_n\|_\infty \leq 4L \min_{i, j}\|z_i - z_j\|$. For a given $n$, let $z_{\ell}, z_k$ be the two pivots with the minimum distance. Consider the residual $\tilde{R}_{n-1}$ obtained by performing $n-1$ Cholesky steps with pivots $z_1, \dots, z_{k-1},z_{k+1},\ldots, z_n$. Define the set $S_k = \{1, \dots, n\}\setminus\{k\}$. The diagonal values of $\tilde{R}_{n-1}$ are given by    \begin{equation*}
        \tilde{R}_{n-1}(x, x) = K(x, x) - \left([K(x, z_i)]_{i=S_k}\right)^T [K(z_i, z_j)]_{i, j\in S_k} [K(x, z_i)]_{i=S_k},
    \end{equation*}
    and notice that 
    \begin{equation*}
        \det\left(K[S_k \cup \{y\}, S_k \cup \{y\}]\right) = \det\left(K[S_k,S_k]\right) \cdot \tilde{R}_{n-1}(y, y),
    \end{equation*}
    for all $y\in \Omega$, from the block determinant formula. If there is $y\in\Omega$ such that $\tilde{R}_{n-1}(y, y)>\tilde{R}_{n-1}(z_k, z_k)$, then the volume corresponding to pivots $S_k \cup \{y\}$ is larger than $S_k\cup\{z_k\} = \{z_1, \dots, z_n\}$, which is a contradiction as it would violate the global or local maximum volume criterion. Thus, the diagonal of $\tilde{R}_{n-1}$ must attain its maximum at $z_k$. By \Cref{lem:MultiStepLipschitzControl} and the fact that $z_{\ell}$ is the nearest pivot to $z_k$, we have that $R_n(x, x) \leq \tilde{R}_{n-1}(x, x) \leq \tilde{R}_{n-1}(z_k, z_k)\leq 4L \|z_\ell - z_k\|$. 
\end{proof}
This corollary demonstrates that maximum volume pivoting enforces the same geometric spacing of pivots as complete pivoting, and thus achieves the same convergence rate. 
However, it is essential to note that this bound is not rank-revealing, which one might hope for. While~\Cref{cor:maxvol} controls the decay of the residual uniformly, it does not guarantee that the pivot set exposes the numerical rank of the kernel matrix in the sense of strong rank-revealing factorizations~\cite{damle2024reveal}. An alternative analysis is needed to address rank-revealing properties of maximum volume pivoting.

\subsection{Complete pivoting with mistakes}\label{sec:completePivotingWithMistakes}
In software implementations, complete pivoting may not be practical because of the requirement to compute the maximum of the diagonal of the residual at every step. To resolve such a problem, one can approximately select the maximum at each step, without significantly degrading the computed approximant.
Let $0<\delta\leq 1$ be a parameter measuring the quality of a pivot. We say that $\delta$-complete pivoting is the scheme where at the $i$th Cholesky step, one selects any pivot point $z_i$ such that 
\[
R_{i-1}(z_i, z_i) \geq \delta \| R_{i-1}\|_{\infty}.
\]
For instance, when $\delta=1/10$, any point $x\in\Omega$ may be chosen as a pivot provided that $R_{i-1}(x,x)$ is at least one-tenth of the maximum diagonal value. This relaxation of complete pivoting makes finding pivots significantly easier. Moreover,~\Cref{thm:convergencegeneral} can be easily extended to $\delta$-complete pivoting to obtain: 

\begin{corollary}
    Let $0< \delta \leq 1$. With the same assumptions as~\Cref{thm:filldistancebound}, the $n$th residual $R_n$ of the Cholesky algorithm with $\delta$-complete pivoting satisfies
    \begin{equation*}
        \|R_n\|_{\infty} \leq \frac{8 LR}{\delta(n^{1/d}-1)}.
    \end{equation*}
\end{corollary}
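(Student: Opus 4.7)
The plan is to adapt the proof of \Cref{thm:convergencegeneral}, with the only new ingredient being an extra factor of $1/\delta$ that tracks the relaxed selection rule. The monotonicity $\|R_0\|_\infty \geq \|R_1\|_\infty \geq \cdots$ that was used implicitly in \Cref{prop:pivotdistance} holds for \emph{any} pivoting strategy, since $R_k$ being SPD gives $\|R_k\|_\infty = \max_{x\in\Omega} R_k(x,x)$ and the diagonal is pointwise nonincreasing in $k$; so it remains valid under $\delta$-complete pivoting.

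The first step is to prove a $\delta$-variant of \Cref{prop:pivotdistance}, namely
\[
\|R_n\|_\infty \;\leq\; \frac{4L}{\delta}\,\min_{\substack{1\leq i,j\leq n\\ i\neq j}}\|z_i - z_j\|.
\]
Let $z_i$ and $z_j$ be the closest pair of pivots with $i<j$. Performing the first $j-1$ Cholesky steps and applying \Cref{lem:MultiStepLipschitzControl} gives $R_{j-1}(z_j,z_j) \leq 4L\|z_j - z_i\|$. The $\delta$-complete pivoting rule forces $R_{j-1}(z_j,z_j) \geq \delta\|R_{j-1}\|_\infty$, so dividing through bounds $\|R_{j-1}\|_\infty$ above by $(4L/\delta)\|z_j - z_i\|$. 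Monotonicity of $\|R_k\|_\infty$ in $k$ then transfers the inequality to $\|R_n\|_\infty$.

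The second step is the exact volume-packing argument from \Cref{thm:convergencegeneral}. Setting $r = 2R/(n^{1/d}-1)$, the $n$ closed balls of radius $r/2$ centered at the pivots cannot all be disjoint inside the enlarged ball of radius $R+r/2$ containing $\Omega$; otherwise $n\operatorname{vol}(B_{r/2}) < \operatorname{vol}(B_{R+r/2})$, contradicting the choice of $r$. Hence $\min_{i\neq j}\|z_i-z_j\|\leq r$, and combining with the bound from the first step yields the claim. There is no genuine obstacle here: the whole exercise is to carry the factor $1/\delta$ through the inequality $R_{j-1}(z_j,z_j) \geq \delta\|R_{j-1}\|_\infty$, while all of the geometry (the packing argument and the dependence on $R$ and $d$) is inherited unchanged from the complete-pivoting case.
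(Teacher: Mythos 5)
Your proof is correct and follows the same route the paper takes: establish the $\delta$-variant $\|R_n\|_\infty \leq (4L/\delta)\min_{i\neq j}\|z_i-z_j\|$ by combining \Cref{lem:MultiStepLipschitzControl} at $x=z_j$, $n=j-1$ with the $\delta$-complete pivoting rule $R_{j-1}(z_j,z_j)\geq\delta\|R_{j-1}\|_\infty$ and monotonicity, and then apply the unchanged packing argument. The paper only sketches this in the discrete-pivoting subsection (\cref{sec:discretepivoting}), and your write-up correctly fills in those details.
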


\subsection{Discrete pivoting}\label{sec:discretepivoting}

In practice, exact complete pivoting is often impractical: at each step, one would have to evaluate the diagonal of the residual everywhere in $\Omega$ and locate its global maximum. A standard remedy is to discretize the diagonal search: fix an $\eta$-net (grid) $G\subset\Omega$ and, at each step, pick the point in $G$ with the largest diagonal residual.

For kernels that are Lipschitz on the diagonal, our one-sided control gives (see~\Cref{lem:MultiStepLipschitzControl})
\[
0\le R_{i-1}(x,x)\;\le\;4L\,\mathrm{dist}(x,Z_{i-1}),
\]
where $\mathrm{dist}(x,Z_{i-1})$ is the minimum distance between $x$ and a pivot location in $Z_{i-1}$. However, this bound is not enough to quantify how well a grid captures large residuals. Let $M=\|R_{i-1}\|_\infty$ and let $x_\star$ be a maximizer, i.e., $R_{i-1}(x_\star,x_\star)=M$. If $y\in G$ is any grid point with $\|x_\star-y\|\le\eta$, then
\[
\mathrm{dist}(x_\star,Z_{i-1})
\;\le\;\|x_\star-y\|+\mathrm{dist}(y,Z_{i-1})
\;\le\;\eta+\mathrm{dist}(y,Z_{i-1}).
\]
Multiplying by $4L$ and using $M\le 4L\,\mathrm{dist}(x_\star,Z_{i-1})$ and $R_{i-1}(y,y)\le 4L\,\mathrm{dist}(y,Z_{i-1})$ gives the additive capture bound
\[
M \;\le\; 4L\eta \;+\; R_{i-1}(y,y).
\]
Hence, the grid maximizer $y_G\in G$ satisfies
\[
R_{i-1}(y_G,y_G)\;\ge\; M-4L\eta.
\]
In particular, whenever $4L\eta\le (1-\delta)M$, the grid pick is a \(\delta\)-approximate pivot:
\[
R_{i-1}(y_G,y_G)\;\ge\;\delta\,\|R_{i-1}\|_\infty.
\]
Thus, discretizing the search reduces exact complete pivoting to $\delta$-complete pivoting with a tolerance that is explicitly controlled by the grid spacing~$\eta$. As the algorithm progresses and the maximum residual $M$ shrinks, one may refine the grid (decrease $\eta$) to maintain a target~$\delta$ bounded away from~$0$.

If at every step we select a pivot satisfying 
\(
R_{i-1}(z_i,z_i)\ge \delta\,\|R_{i-1}\|_\infty
\)
(with any $0<\delta\le 1$), we find that 
\(
\|R_{j-1}\|_\infty \le (4L/\delta)\,\min_{i<j}\|z_i-z_j\|
\),
and therefore
\[
\|R_n\|_\infty \;\le\; \frac{8 L R}{\delta\,(n^{1/d}-1)}.
\]
Thus, choosing a grid with spacing $\eta$ such that $4L\eta\le (1-\delta)\|R_{i-1}\|_\infty$ guarantees that the grid-based pivot achieves at most a $\delta$ pivoting mistake at step $i$. In practice, a simple strategy is to start with a coarse grid and refine it adaptively so that $\delta$ remains near~$1$ while keeping evaluations inexpensive.  

There is an adaptive pivoting strategy that one can use for Gaussian elimination on general bivariate functions, and one can employ a similar discrete pivoting strategy for SPD kernels, see~\cite{townsend2013extension}. 

\section{Error bounds for the pivoted Cholesky algorithm on kernels with extra smoothness}\label{sec:generalsmooth}

We now turn to kernels with additional smoothness and show that stronger convergence rates can be obtained when additional smoothness is assumed on the kernel. We write $\partial_j^{1}K(x,y)$ for the $j$th component of the gradient of $K(x,y)$ with respect to its first variable, and similarly $\partial_j^{2}K(x,y)$ for derivatives with respect to the second variable. For a scalar function $f:\Omega\to\mathbb{R}$, we use $\partial_j f$ to denote the $j$th component of its gradient.

\begin{lemma}\label{lem:lipschitzchangemulti}
Let $\Omega\subset\mathbb{R}^d$ be compact and let $K:\Omega\times\Omega\to\mathbb{R}$ be a continuous SPD kernel.
Assume:
\begin{enumerate}[label=(\alph*)]
\item There exists $L>0$ such that, for all $1\le j\le d$,
\begin{equation}\label{eq:diag-deriv-lip}
\bigl|\partial_j^{1}K(x,y)-\partial_j^{1}K(x,x)\bigr|\le L\|x-y\|,
\qquad
\bigl|\partial_j^{2}K(x,y)-\partial_j^{2}K(x,x)\bigr|\le L\|x-y\|,
\quad x,y\in\Omega,
\end{equation}
where the partials exist on $\Omega\times\Omega$.
\item There exists $L_0>0$ such that
\begin{equation}\label{eq:first-arg-lip}
\bigl|K(x,y)-K(x',y)\bigr|\le L_0\|x-x'\|\qquad \text{for all }x,x',y\in\Omega,
\end{equation}
and $K$ is differentiable in its first argument on $\Omega\times\Omega$.
\end{enumerate}
Let $R_1$ be the residual after one Cholesky step (with any pivoting strategy) and let $z_1$ be the first pivot. Then the diagonal of the residual $D_1(x)=R_1(x,x)$ is differentiable on $\Omega$, and for each $j=1,\dots,d$ there exists a constant $C>0$ (depending only on $L$, $L_0$, and $K_{\min}=\min_{x\in\Omega}K(x,x)>0$) such that
\begin{equation}\label{eq:grad-D1-linear}
\bigl|\partial_j D_1(x)\bigr|\;\le\; \left(4L + \frac{2L_0^2}{K(z_1,z_1)}\right)\,\|x-z_1\|\qquad \text{for all }x\in\Omega.
\end{equation}
\end{lemma}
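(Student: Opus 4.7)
The plan is to differentiate $D_1(x)=K(x,x)-K(x,z_1)^2/K(z_1,z_1)$ componentwise, simplify the diagonal derivative using the symmetry of $K$, and then telescope so that hypotheses (a) and (b) each contribute a factor of $\|x-z_1\|$.

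I would first justify differentiability and write $\partial_j D_1$ in closed form. Hypothesis (b) makes $K(\cdot,z_1)$ differentiable with $\partial_j[K(x,z_1)^2]=2K(x,z_1)\,\partial_j^1 K(x,z_1)$, while the existence of both partials (from (a)) and the chain rule give $\partial_j[K(x,x)]=\partial_j^1 K(x,x)+\partial_j^2 K(x,x)$. The symmetry $K(x,y)=K(y,x)$, differentiated in $x_j$ and evaluated at $y=x$, yields $\partial_j^1 K(x,x)=\partial_j^2 K(x,x)$, so $\partial_j[K(x,x)]=2\partial_j^1 K(x,x)$. Therefore
\[
\partial_j D_1(x) \;=\; 2\partial_j^1 K(x,x) \;-\; \frac{2K(x,z_1)\,\partial_j^1 K(x,z_1)}{K(z_1,z_1)},
\]
which manifestly vanishes at $x=z_1$, consistent with $D_1\ge 0$ and $D_1(z_1)=0$.

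Next, I would add and subtract $2\partial_j^1 K(x,z_1)$ to produce the clean telescoping
\[
\partial_j D_1(x) \;=\; 2\bigl[\partial_j^1 K(x,x)-\partial_j^1 K(x,z_1)\bigr] \;+\; \frac{2\partial_j^1 K(x,z_1)\bigl[K(z_1,z_1)-K(x,z_1)\bigr]}{K(z_1,z_1)}.
\]
Hypothesis (a) bounds the first bracket by $L\|x-z_1\|$ in absolute value. For the second summand, hypothesis (b) gives $|\partial_j^1 K(x,z_1)|\le L_0$ (Lipschitzness of $K$ in its first slot caps the first-slot gradient) and $|K(z_1,z_1)-K(x,z_1)|\le L_0\|x-z_1\|$; the denominator is nonzero because $K$ is SPD, so $K(z_1,z_1)\ge K_{\min}>0$. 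Collecting the two estimates gives $|\partial_j D_1(x)|\le \bigl(2L+2L_0^2/K(z_1,z_1)\bigr)\|x-z_1\|$, which implies \eqref{eq:grad-D1-linear} with a bit of slack in the coefficient of $L$.

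The only subtlety is the choice of decomposition. A naive split that keeps $\partial_j^1 K(x,x)$ and $\partial_j^2 K(x,x)$ separate leaves a residual $\partial_j^1 K(x,z_1)+\partial_j^2 K(x,z_1)-2K(x,z_1)\,\partial_j^1 K(x,z_1)/K(z_1,z_1)$ which vanishes at $x=z_1$ but whose linear growth in $\|x-z_1\|$ is not handed to us directly by (a)--(b); extracting it would need a second-order argument, which we are trying to avoid. Collapsing $\partial_j[K(x,x)]$ to $2\partial_j^1 K(x,x)$ by symmetry sidesteps this, reducing the proof to a single application each of (a) and (b) and avoiding any extra regularity assumption beyond what is listed.
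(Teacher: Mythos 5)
Your argument is correct and follows the same overall strategy as the paper: write down $\partial_j D_1(x)$ explicitly, observe that it vanishes at $x=z_1$, and then telescope so that each hypothesis contributes a factor of $\|x-z_1\|$. The distinguishing move is that you invoke the symmetry $K(x,y)=K(y,x)$ to collapse $\partial_j[K(x,x)]=\partial_j^1K(x,x)+\partial_j^2K(x,x)$ into $2\partial_j^1K(x,x)$ \emph{before} telescoping; this reduces the entire bound to a two-term split,
\[
\partial_j D_1(x)=2\bigl[\partial_j^1K(x,x)-\partial_j^1K(x,z_1)\bigr]
+\frac{2\,\partial_j^1K(x,z_1)\bigl[K(z_1,z_1)-K(x,z_1)\bigr]}{K(z_1,z_1)},
\]
and yields the sharper constant $2L+2L_0^2/K(z_1,z_1)$. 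The paper instead retains $\partial_j^1K(x,x)$ and $\partial_j^2K(x,x)$ as separate quantities and decomposes $\partial_j D_1(x)-\partial_j D_1(z_1)$ into four pieces, three of which invoke hypothesis (a), ending up with $4L+2L_0^2/K(z_1,z_1)$; your route reaches the same conclusion in fewer steps with a smaller prefactor, and since the lemma is stated with the looser $4L$, your bound trivially implies it. One small shared caveat, which applies equally to the paper's argument: the identity $\partial_j[K(x,x)]=\partial_j^1K(x,x)+\partial_j^2K(x,x)$ uses the chain rule along the diagonal, which formally needs $K$ to be differentiable as a function of both arguments jointly, not merely to possess the listed partials; this is automatic for the $C^{1,1}$ kernels the lemma is meant to cover, but is worth flagging if one wants the hypotheses as stated to be literally sufficient.
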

\begin{proof}
Write
\[
D_1(x)=K(x,x)-\frac{K(x,z_1)^2}{K(z_1,z_1)},\qquad
\partial_j D_1(x)
= \partial_j^1K(x,x)+\partial_j^2K(x,x)
-\frac{2}{K(z_1,z_1)}\,K(x,z_1)\,\partial_j^1K(x,z_1).
\]
Because $R_1(\cdot,z_1)=R_1(z_1,\cdot)=0$, we have $\partial_j D_1(z_1)=0$ for all $j$.
Subtracting $\partial_j D_1(z_1)$ and using the triangle inequality,
\begin{align*}
|\partial_j D_1(x)|
&= \bigl|\partial_j D_1(x)-\partial_j D_1(z_1)\bigr| \\
&\le \underbrace{\bigl|\partial_j^1K(x,x)-\partial_j^1K(x,z_1)\bigr|}_{\le\, L\|x-z_1\|\ \text{by }\eqref{eq:diag-deriv-lip}}
+\underbrace{\bigl|\partial_j^2K(x,x)-\partial_j^2K(x,z_1)\bigr|}_{\le\, L\|x-z_1\|\ \text{by }\eqref{eq:diag-deriv-lip}} \\
&\quad + \frac{2}{K(z_1,z_1)}
\underbrace{\bigl|K(x,z_1)-K(z_1,z_1)\bigr|}_{\le\, L_0\|x-z_1\|\ \text{by }\eqref{eq:first-arg-lip}}
\ \underbrace{\bigl|\partial_j^1K(x,z_1)\bigr|}_{\le\, L_0\ \text{since }K\text{ is }L_0\text{-Lipschitz in }x} \\
&\quad + \frac{2}{K(z_1,z_1)}\,K(z_1,z_1)\,
\underbrace{\bigl|\partial_j^1K(x,z_1)-\partial_j^1K(z_1,z_1)\bigr|}_{\le\, L\|x-z_1\|\ \text{by }\eqref{eq:diag-deriv-lip}}.
\end{align*}
Here we used that \cref{eq:first-arg-lip} and differentiability in $x$ imply
\(
\sup_{(x,y)\in\Omega^2}|\partial_j^{1}K(x,y)|\le L_0
\)
(by the mean value theorem). Combining the bounds gives
\[
|\partial_j D_1(x)| \;\le\; \left(4L + \frac{2L_0^2}{K(z_1,z_1)}\right)\,\|x-z_1\|,
\]
which proves \cref{eq:grad-D1-linear}.
\end{proof}

The assumptions of~\Cref{lem:lipschitzchangemulti} are automatically satisfied by $C^{1,1}$ kernels.  Indeed, if $K\in C^{1,1}(\Omega\times\Omega)$, then all first partial derivatives of $K$ are globally Lipschitz on the compact set $\Omega\times\Omega$.  This immediately implies condition~\cref{eq:diag-deriv-lip}, since fixing $x$ and comparing $\partial_j^1 K(x,y)$ with $\partial_j^1 K(x,x)$ amounts to moving along a line of length $\|x-y\|$ in the $(x,y)$ variables.  Likewise, continuity and boundedness of $\nabla_x K$ on $\Omega\times\Omega$ ensure condition~\cref{eq:first-arg-lip} via the mean value theorem, with constant $L_0=\sup_{(x,y)\in\Omega^2}\|\nabla_x K(x,y)\|$. 
Thus, every $C^{1,1}$ kernel satisfies both (a) and (b), so the lemma applies in particular to this class of kernels. 

\begin{lemma}\label{lem:boundedregiondiff2}
Assume the hypotheses of \Cref{lem:lipschitzchangemulti} and $K_{\min} = \min_{z\in\Omega} K(z,z)>0$. 
Let $z_1,\dots,z_n\in\Omega$ be the first $n$ Cholesky pivots (for any pivoting strategy). Then,
\begin{equation}\label{eq:quadratic-near-pivots}
0\;\le\; R_n(x,x)\;\le\; \sqrt{d}\left(2L + \frac{L_0^2}{K_{\min}}\right)\,\min_{1\leq i\leq n} \|x - z_i\|^2, \qquad x\in\Omega.
\end{equation}
In particular, the diagonal residual is quadratic in the distance to the nearest pivot.
\end{lemma}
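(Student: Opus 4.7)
The plan is to mirror the two-step structure of \Cref{lem:lipschitzchange} and \Cref{lem:MultiStepLipschitzControl}, replacing the linear-in-distance bound on $R_1(x,x)$ with a quadratic one obtained by integrating the gradient estimate from \Cref{lem:lipschitzchangemulti}. Concretely, I would first upgrade \Cref{lem:lipschitzchangemulti} to a quadratic bound on $D_1(x)=R_1(x,x)$ itself, and then propagate that bound through $n$ Cholesky steps by the same pivot-reordering and diagonal-monotonicity argument used in \Cref{lem:MultiStepLipschitzControl}.

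For the one-step upgrade, the key observation is that $D_1(z_1)=R_1(z_1,z_1)=0$ and, because the bound in \cref{eq:grad-D1-linear} forces each partial to vanish at $z_1$, also $\nabla D_1(z_1)=0$. I would therefore apply the fundamental theorem of calculus along the segment from $z_1$ to $x$,
\[
D_1(x)=\int_0^1 \nabla D_1(z_1+t(x-z_1))\cdot (x-z_1)\,dt,
\]
bound the integrand using \cref{eq:grad-D1-linear} coordinate by coordinate via
\[
\|\nabla D_1(z_1+t(x-z_1))\|\;\le\;\sqrt{d}\left(4L+\frac{2L_0^2}{K(z_1,z_1)}\right)\,t\,\|x-z_1\|,
\]
and integrate in $t\in[0,1]$. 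Using $K(z_1,z_1)\ge K_{\min}$ to make the constant pivot-independent yields
\[
0\le D_1(x)\;\le\;\sqrt{d}\!\left(2L+\frac{L_0^2}{K_{\min}}\right)\|x-z_1\|^2.
\]

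For the multistep case, I would repeat the argument of \Cref{lem:MultiStepLipschitzControl} verbatim: fix $x\in\Omega$, let $z_i$ be the pivot in $\{z_1,\dots,z_n\}$ nearest to $x$, and invoke order-invariance of the pivoted Cholesky residual to perform the pivot $z_i$ first. The one-step quadratic bound above, applied to the residual $\tilde R_1$ obtained after the single pivot $z_i$, yields $\tilde R_1(x,x)\le\sqrt{d}\bigl(2L+L_0^2/K_{\min}\bigr)\|x-z_i\|^2$, and the monotonicity $R_{k+1}(x,x)\le R_k(x,x)$ of the diagonal under any further Cholesky step gives $R_n(x,x)\le\tilde R_1(x,x)$, proving \cref{eq:quadratic-near-pivots}.

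The main technical wrinkle is the integration along the straight segment from $z_1$ to $x$: this implicitly requires $\Omega$ to contain that segment, i.e.\ convexity (or star-shapedness with respect to every potential pivot). If the paper's ambient setting does not already assume convexity of $\Omega$, one either adds it as a hypothesis, replaces the segment by a rectifiable curve inside $\Omega$ and absorbs a geodesic-vs-Euclidean factor into the constant, or extends $K$ to a convex neighborhood while preserving \cref{eq:diag-deriv-lip} and \cref{eq:first-arg-lip}. The secondary point to track carefully is the \emph{uniformity} of the constant across pivots: the reordering trick only produces the clean bound \cref{eq:quadratic-near-pivots} because $K(z_i,z_i)\ge K_{\min}>0$ for every pivot, which is exactly why the hypothesis $K_{\min}>0$ is imposed.
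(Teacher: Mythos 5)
Your proposal is correct and follows essentially the same route as the paper: reorder the pivots so the nearest pivot $z_i$ to $x$ comes first, apply the gradient bound from \Cref{lem:lipschitzchangemulti} to $\tilde D_1(x)=\tilde R_1(x,x)$, observe $\tilde D_1(z_i)=0$ and $\nabla\tilde D_1(z_i)=0$, integrate along the segment $\gamma(t)=z_i+t(x-z_i)$ to get the $\tfrac{\sqrt d\,C_1}{2}\|x-z_i\|^2$ bound, replace $K(z_i,z_i)$ by $K_{\min}$, and finish with diagonal monotonicity $R_n(x,x)\le\tilde D_1(x)$. Your caveat about the segment needing to lie inside $\Omega$ is a genuine observation --- the paper's own proof integrates along the same chord without remarking on it --- but it is a shared implicit hypothesis (convexity/star-shapedness of $\Omega$, or an extension argument), not a defect unique to your version, and the remedies you list are the standard ones.
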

\begin{proof}
Fix $x\in\Omega$ and choose an index $i$ so that $\|x-z_i\|$ is minimized. 
By the order-invariance of the residual with respect to the set of chosen pivots, we may perform the first Cholesky step at $z_i$ and then apply the remaining $n-1$ steps in any order to obtain the same $R_n$. Let $\tilde{R}_1$ denote the residual after the single step at pivot $z_i$, and set $\tilde{D}_1(x)=\tilde{R}_1(x,x)$.  By~\Cref{lem:lipschitzchangemulti}, each partial derivative of $\tilde{D}_1$ satisfies the pointwise bound
\[
|\partial_j \tilde{D}_1(x)| \;\le\; \underbrace{\left(4L + \frac{2L_0^2}{K(z_i,z_i)}\right)}_{=C_1}\,\|x-z_i\|.
\]
Hence, $\|\nabla \tilde{D}_1(x)\|\le \sqrt{d}\,C_1\,\|x-z_i\|$. 
Using $\tilde{D}_1(z_i)=0$ and $\nabla \tilde{D}_1(z_i)=0$, integrate along the line segment $\gamma(t)=z_i+t(x-z_i)$, we find that
\[
\tilde{D}_1(x)
=\!\! \int_0^1 \!\!\!\nabla \tilde{D}_1(\gamma(t))\cdot (x-z_i)\,dt
\le \!\!\int_0^1 \!\!\!\|\nabla \tilde{D}_1(\gamma(t))\|\,\|x-z_i\|\,dt
\le\!\!\int_0^1 \!\!\!\sqrt{d}\,C_1\,t\,\|x-z_i\|^2\,dt
=\frac{\sqrt{d}\,C_1}{2}\,\|x-z_i\|^2.
\]
Since subsequent Cholesky steps can only decrease the diagonal of the residual,
\[
0 \le R_n(x,x) \le \tilde{D}_1(x) \le \frac{\sqrt{d}\,C_1}{2}\,\|x-z_i\|^2.
\]
The argument can be repeated for any $x\in\Omega$ to reach the conclusion in~\cref{eq:quadratic-near-pivots}. 
\end{proof}

In particular,~\Cref{lem:boundedregiondiff2} shows that for $C^{1,1}$ kernels the diagonal of the residual is locally controlled at second order near the pivot set. For $C^{1,1}$ kernels, we see that the residual vanishes at each pivot and cannot grow faster than quadratically with the distance to the nearest pivot. 
This quadratic control is a marked improvement over the merely linear behavior available under diagonal Lipschitz continuity. We can write the bound in terms of the fill distance: 
\begin{theorem}\label{thm:filldistance-squared}
Assume the hypotheses of \Cref{lem:lipschitzchangemulti} and $K_{\min} = \min_{z\in\Omega} K(z,z)>0$. Let $Z_n=\{z_1,\dots,z_n\}\subset\Omega$ be the first $n$ pivots (for any pivoting strategy). Then, for all $x\in\Omega$,
\[
0\;\le\; R_n(x,x)\;\le\; \sqrt{d}\left(2L + \frac{L_0^2}{K_{\min}}\right) h^2_{Z_n},
\]
where $h_{Z_n}$ is the fill distance of $\{z_1,\dots,z_n\}$. 
\end{theorem}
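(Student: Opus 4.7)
The theorem is an immediate consequence of \Cref{lem:boundedregiondiff2}, which already supplies the pointwise quadratic estimate
\[
0\;\le\;R_n(x,x)\;\le\;\sqrt{d}\!\left(2L+\frac{L_0^2}{K_{\min}}\right)\min_{1\le i\le n}\|x-z_i\|^2,\qquad x\in\Omega,
\]
with the uniform constant already expressed in terms of $K_{\min}$. The only remaining step is purely geometric: by the definition of the fill distance in \cref{eq:fillDistance}, we have $\min_{z\in Z_n}\|x-z\|\le h_{Z_n}$ for every $x\in\Omega$. Squaring this inequality and substituting into the pointwise bound yields
\[
R_n(x,x)\;\le\;\sqrt{d}\!\left(2L+\frac{L_0^2}{K_{\min}}\right) h_{Z_n}^{2},
\]
which is the claim. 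Non-negativity $R_n(x,x)\ge 0$ is inherited from positive semidefiniteness of the residual kernel $R_n$, which was also exploited in the earlier proofs.

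I expect no real obstacle: all the analytic heavy lifting has already been carried out in the preceding lemmas. Specifically, the gradient estimate $|\partial_j D_1(x)|\lesssim \|x-z_1\|$ from \Cref{lem:lipschitzchangemulti}, the segment integration from $x$ to its nearest pivot that upgrades this linear gradient bound to a quadratic pointwise bound in \Cref{lem:boundedregiondiff2}, and the monotonicity of the diagonal under successive Cholesky steps, together handle every nontrivial aspect of the estimate. The only remaining task is the replacement of the local quantity $\min_i\|x-z_i\|^2$ by the uniform fill distance quantity $h_{Z_n}^2$, which is a direct application of the supremum in the definition of $h_{Z_n}$. In particular, the proof does not require any pivoting strategy--specific information, mirroring the status of \Cref{thm:filldistancebound} in the merely Lipschitz case.
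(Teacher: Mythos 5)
Your proof is correct and takes exactly the same route as the paper, which simply observes that the result is immediate from \Cref{lem:boundedregiondiff2} after replacing $\min_i\|x-z_i\|$ by $h_{Z_n}$. The additional explanation you provide faithfully reconstructs the underlying mechanism, but the core step is identical.
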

\begin{proof}
The result is immediate from~\Cref{lem:boundedregiondiff2}. 
\end{proof} 

The fact that we can bound $R_n$ on the diagonal quadratically with the distance to the nearest pivot is the key mechanism behind an improved convergence rate for both uniform pivoting and complete pivoting. 

For uniform pivoting, the quadratic diagonal control in \Cref{thm:filldistance-squared} leads to a substantially faster convergence rate. 
As before, suppose $\Omega=[0,1]^d$ and take $Z_n$ to be a tensor grid with $m$ points per coordinate (so $n=m^d$). Then, we have $h_{Z_n}\leq \frac{\sqrt d}{2}\,n^{-1/d}$ and hence,
\[
\|R_n\|_{\infty}\;\le\;\sqrt{d}\!\left(2L+\frac{L_0^2}{K_{\min}}\right)\,h_{Z_n}^2
\;\le\;C\,d^{3/2}\,n^{-2/d},
\]
for some constant $C>0$. Thus, under $C^{1,1}$ regularity of the kernel $K$, uniform pivoting improves the algebraic rate of convergence $\mathcal{O}(n^{-2/d})$ as $n\rightarrow \infty$.

For complete pivoting, the quadratic diagonal control again yields an improved rate. Since we know that complete pivoting ensures that $h_{Z_n} \leq (2R)/(n^{1/d} - 1)$, we find that 
\[
\|R_n\|_{\infty}\;\le\;\sqrt{d}\left(2L + \frac{L_0^2}{K_{\min}}\right) \left(\frac{2R}{n^{1/d} - 1}\right)^2\;=\;\mathcal{O}(n^{-2/d}),
\]
where $\Omega$ is contained in a ball of radius $R$. Thus, under $C^{1,1}$ regularity of the kernel, complete pivoting automatically enforces quasi-uniform pivot distributions and achieves the same $\mathcal{O}(n^{-2/d})$ rate as uniform pivoting, but without requiring predetermined pivoting locations. In~\Cref{fig:smoother}, we show that as the kernel gets smoother, we witness higher rates of convergence of the Cholesky algorithm with complete pivoting. 

\begin{figure}
\begin{minipage}{.49\textwidth}
\begin{overpic}[width=\textwidth]{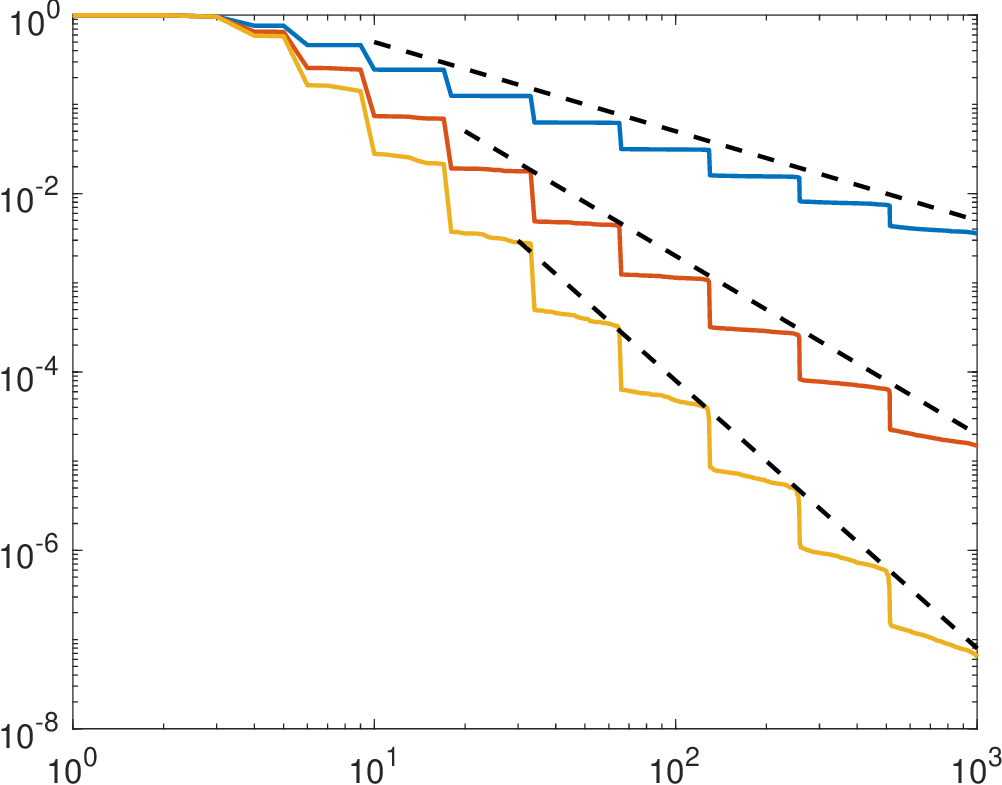}
  \put(83,28) {\rotatebox{-45}{$\mathcal{O}(n^{-3})$}}
  \put(75,50) {\rotatebox{-30}{$\mathcal{O}(n^{-2})$}}
  \put(75,65) {\rotatebox{-16}{$\mathcal{O}(n^{-1})$}}
  \put(80,19) {\color{matlabyellow}{\rotatebox{-45}{$\nu=3/2$}}}
  \put(75,40) {\color{matlabred}{\rotatebox{-30}{$\nu=1$}}}
  \put(75,56) {\color{matlabblue}{\rotatebox{-16}{$\nu=1/2$}}}
  \put(-5,35) {\rotatebox{90}{$\|R_n\|_\infty$}}
  \put(50,-3) {$n$}
\end{overpic}
\end{minipage}
\caption{The convergence of the Cholesky algorithm with complete pivoting for the Mat\'{e}rn kernel with $\ell=1/2$, $\Omega = [-1,1]$, and $d = 1$ for $\nu = 1/2$ (blue), $\nu=1$ (red), and $\nu = 3/2$ (yellow). The theory provided in this paper does not explain the $\mathcal{O}(n^{-3})$ convergence rate when $\nu=3/2$. However, since the Mat\'{e}rn kernel is translation-invariant and for $\nu\geq 3/2$ the kernel is $C^2$ smooth the P-greedy literature is a useful resource (see~\Cref{sec:Pgreedy} and~\cite{santin6convergence}).}
\label{fig:smoother}
\end{figure} 

Finally, we conclude this section with a remark that a similar argument for kernels that are several times differentiable fails to yield a higher-order convergence rate. The following example illustrates why. Let $\Omega=[0,1]$ and consider the Gaussian kernel 
\[
K(x,y) = \exp\!\Big(-\frac{(x-y)^2}{2\sigma^2}\Big), \qquad \sigma>0,
\]
which is infinitely many times differentiable. Fix a pivot $z\in[0,1]$ and perform a single Cholesky step at $z$. On the diagonal, the residual is
\[
R_1(x,x) \;=\; 1 - \frac{K(x,z)^2}{K(z,z)}
\;=\; 1 - \exp\!\Big(-\frac{(x-z)^2}{\sigma^2}\Big).
\]
A Taylor expansion of the exponential at $x=z$ gives
\[
\frac{(x-z)^2}{\sigma^2} - \frac{(x-z)^4}{2\sigma^4}
\;\le\; R_1(x,x) \;\le\; \frac{(x-z)^2}{\sigma^2},
\]
so there exist constants $c_\sigma,r_\sigma>0$ such that
\[
R_1(x,x) \;\ge\; c_\sigma\,|x-z|^2 \qquad\text{whenever } |x-z|\le r_\sigma.
\]
Thus, even with extra regularity, we only have a quadratic bound on the residual around pivot points. 

\section{Consequences of our convergence rates}\label{sec:connections}

The pivoted Cholesky algorithm appears in a number of applications. However, in most applications, the theoretical convergence analysis is often omitted, since the algorithm performs very well in practice. Our convergence rates have several consequences for the pivoted Cholesky algorithm applied to matrices (see~\cref{sec:matrixcounterparts}), Gaussian process regression (see~\cref{sec:GaussianProcessRegression}), and the convergence of the power function in the P-greedy method (see~\cref{sec:Pgreedy}). We briefly discuss a few of these connections now. 

\subsection{Matrix counterparts of the kernel results}\label{sec:matrixcounterparts}

Our analysis has focused on kernels $K:\Omega\times\Omega\to\mathbb{R}$, where the pivoted Cholesky algorithm produces low-rank approximations with residuals controlled in terms of the fill distance of the pivot set. 
Interestingly, many of these ideas have natural discrete analogues when one replaces kernels by structured matrices. In this setting, the geometry of the domain $\Omega$ is mirrored by the index set $\{1,\dots,m\}$, and Lipschitz continuity of $K$ along the diagonal is replaced by discrete difference quotients of the matrix entries. To make this precise, let $A\in\mathbb{R}^{m\times m}$ be a SPD matrix, and define the discrete diagonal Lipschitz constant as
\begin{equation}
G_A = \max_{i\neq j}\,\frac{|A_{i,i}-A_{i,j}|}{|i-j|}.
\label{eq:GA}
\end{equation}
This quantity is the matrix analogue of the Lipschitz constant $L$ from the kernel setting: it measures how rapidly the entries of $A$ vary along each row as the column index changes. Applying the Cholesky algorithm with complete pivoting to $A$ then yields the following discrete counterpart of \Cref{thm:convergencegeneral}.

\begin{corollary}\label{cor:matrixversion}
    Let $A$ be an $m\times m$ SPD matrix and define $G_A$ as in~\cref{eq:GA}. Let $A_n$ be the rank-$n$ approximation obtained by the Cholesky decomposition with complete pivoting. 
    Then, for $1<n\geq m$, we have
    \[
        \|A - A_n\|_{\max} \;\leq\; \frac{4(m-1)G_A}{n-1} \;=\;\mathcal{O}(n^{-1}),
    \]
    where $\|\cdot\|_{\max}$ denotes the maximum absolute matrix entry.
\end{corollary}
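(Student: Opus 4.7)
The plan is to view Corollary~\ref{cor:matrixversion} as a direct discrete specialization of~\Cref{thm:convergencegeneral} in dimension $d=1$. I would set $\Omega = \{1, 2, \dots, m\} \subset \mathbb{R}$ and define the discrete kernel $K:\Omega\times\Omega\to\mathbb{R}$ by $K(i, j) = A_{i, j}$. Since $A$ is SPD, every principal submatrix of $A$ is SPD, so $K$ is an SPD kernel on the finite set $\Omega$ in the sense of the paper's footnote definition. The matrix-level Cholesky algorithm with complete pivoting on $A$ is then identical, step by step, to the kernel-level pivoted Cholesky algorithm on $K$: picking the largest diagonal entry of the residual matrix at each step is exactly $z_n = \arg\max_{x\in\Omega} R_{n-1}(x, x)$. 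Under this identification, $A_n = K_n$ and $\|A - A_n\|_{\max} = \|R_n\|_\infty$.

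Next, I would verify the diagonal Lipschitz condition~\cref{eq:LipschitzAlongDiagonal} with $L = G_A$. For $i, j \in \Omega$ with $i\neq j$, the definition~\cref{eq:GA} gives
\[
|K(i, i) - K(i, j)| = |A_{i, i} - A_{i, j}| \;\le\; G_A\,|i-j| \;=\; G_A\,\|i-j\|,
\]
and the condition is trivial when $i = j$. Since $\Omega\subset[1, m]$ is contained in the closed ball in $\mathbb{R}$ of radius $R = (m-1)/2$ centered at $(m+1)/2$, applying~\Cref{thm:convergencegeneral} with $d = 1$, $L = G_A$, and $R = (m-1)/2$ yields
\[
\|A - A_n\|_{\max} \;=\; \|R_n\|_\infty \;\le\; \frac{8LR}{n^{1/d}-1} \;=\; \frac{4(m-1)G_A}{n-1},
\]
which is the desired bound.

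The only point requiring verification, and the main (mild) obstacle, is that the proofs of~\Cref{lem:lipschitzchange},~\Cref{lem:MultiStepLipschitzControl},~\Cref{thm:filldistancebound},~\Cref{prop:pivotdistance}, and~\Cref{thm:convergencegeneral} apply verbatim when $\Omega$ is a finite subset of $\mathbb{R}^d$ rather than a continuum. Inspecting those proofs, the ingredients used are the diagonal Lipschitz inequality, positive semidefiniteness of residuals, invariance of $R_n$ under reordering of the pivots, and the volume/pigeonhole estimate for balls in $\mathbb{R}^d$; none of these steps uses continuity of $K$ or density of $\Omega$. In particular, the volume argument in the proof of~\Cref{thm:convergencegeneral} only requires that $\Omega$ sit inside a ball of radius $R$, which holds equally for $\{1,\dots,m\}\subset\mathbb{R}$. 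Once this is recorded, the corollary follows immediately; there is essentially no extra work beyond the identifications above. As a sanity check, one may alternatively avoid invoking the volume argument and use the fact that any $n$ distinct integers in $\{1,\dots,m\}$ must contain two at distance at most $(m-1)/(n-1)$, and then apply~\Cref{prop:pivotdistance} directly to obtain the same bound.
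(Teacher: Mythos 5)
Your proof is correct and fills in exactly the argument the paper leaves implicit: the corollary follows by viewing $A$ as a kernel on the finite compact set $\{1,\dots,m\}\subset\mathbb{R}$, checking that the diagonal Lipschitz condition~\eqref{eq:LipschitzAlongDiagonal} holds with $L=G_A$, and applying~\Cref{thm:convergencegeneral} with $d=1$ and $R=(m-1)/2$. Your closing observation is also worth noting: applying~\Cref{prop:pivotdistance} together with the elementary pigeonhole fact that any $n$ distinct integers in $\{1,\dots,m\}$ contain two at distance at most $(m-1)/(n-1)$ gives the same bound more directly, sidestepping the volume argument entirely.
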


This result shows that, in the matrix setting, complete pivoting necessarily drives the residual down at a uniform $\mathcal{O}(n^{-1})$ rate. 
Just as in the kernel case, the key mechanism is that complete pivoting prevents the pivots from clustering too closely: two pivot indices cannot be adjacent unless the residual is already small. In this way, the geometry of the index set enforces a quasi-uniform distribution of pivot indices, leading to algebraic decay of the residual. 

\subsection{Gaussian process regression}\label{sec:GaussianProcessRegression}

Gaussian process regression (Kriging)~\cite{williams2006gaussian} is a popular Bayesian nonparametric regression method, which is frequently used in machine learning. In the noise-free setting, one places a Gaussian process prior on the target function $f$ with mean $\mu:\Omega\to\mathbb{R}$ and covariance kernel $K:\Omega\times\Omega\to\mathbb{R}$. After $n$ observations, the posterior mean and covariance are updated via Bayes’ rule:
\[
\begin{aligned}
    \mu_n(x) &= \mu(x) + ([K(x, x_i)]_{i=1}^n)^T\left([K(x_i, x_j)]_{i, j=1}^n\right)^{-1}[y_i - \mu(x_i)]_{i=1}^n,\\
    R_n(x, x') &= K(x, x') - ([K(x, x_i)]_{i=1}^n)^T\left([K(x_i, x_j)]_{i, j=1}^n\right)^{-1}[K(x, x_i)]_{i=1}^n.
\end{aligned}
\]
Here, the posterior covariance $R_n$ coincides with the residual of the Cholesky algorithm after $n$ steps with pivot set $\{x_1,\dots,x_n\}$. Thus, bounds on the Cholesky residual directly translate into uniform variance reduction guarantees for Gaussian process regression.

Therefore, our convergence rates for the Cholesky algorithm with uniform pivoting, complete pivoting, or global/local maximum volume pivoting can be used to bound $R_n$ uniformly. In this setting, the control of the maximum variance in the Gaussian process is achieved after $n$ samples. 

Despite the empirical success of Gaussian process regression, its convergence properties are rarely addressed in the literature. The prevailing view is that smooth prior kernels yield satisfactory convergence in practice, see for instance~\cite{ambikasaran2015fast}. More systematic analyses connect Gaussian process regression to kernel interpolation, where the posterior mean corresponds to the interpolant and the posterior covariance coincides with the power function~\cite{scheuerer2013interpolation}. In this setting, deterministic convergence rates have been obtained for translation-invariant kernels~\cite{kanagawa2018gaussian,stuart2018posterior}, with few exceptions such as~\cite{de2005near}. Our results contribute to this picture by providing algebraic convergence rates for Gaussian process regression with general kernels, including those that are less smooth than $C^2$ and not necessarily translation-invariant. 

\subsection{The P-greedy algorithm in the kernel interpolation literature}\label{sec:Pgreedy}

The P-greedy algorithm is a widely used greedy strategy in kernel approximation and reduced basis methods~\cite{wenzel2023analysis}. Its selection rule is defined in terms of the so-called power function, which quantifies the worst-case interpolation error in the reproducing kernel Hilbert space (RKHS) associated with a kernel~\cite{de2005near,schaback2000adaptive}. 
We briefly recall these notions and then show that the P-greedy algorithm is equivalent to pivoted Cholesky.

Let $\Omega\subset \mathbb{R}^d$ and $K:\Omega\times\Omega\to\mathbb{R}$ be a continuous SPD kernel, with associated RKHS $\mathcal{H}_K$. 
Given a finite set of points $S_n=\{z_1,\dots,z_n\}\subset\Omega$, the kernel interpolant $I_{S_n} f\in \mathcal{H}_K$ to data $\{f(z_j)\}_{j=1}^n$ is the unique function of the form
\[
(I_{S_n} f)(x) \;=\; \sum_{j=1}^n \alpha_j K(x,z_j),
\]
that satisfies $(I_{S_n} f)(z_i)=f(z_i)$ for $i=1,\dots,n$.  
The error of this interpolant can be bounded in terms of the power function
\[
    P_{S_n}(x) \;=\; \sup_{\|f\|_{\mathcal{H}_K}=1}\, |f(x) - (I_{S_n} f)(x)|, \qquad x\in\Omega,
\]
which measures the maximal interpolation error at $x$ over all unit-norm functions in $\mathcal{H}_K$.

Let $K(S_n,S_n)=[K(z_i,z_j)]_{i,j=1}^n$, $k(x) = (K(x,z_1),\dots,K(x,z_n))^\top$, and define the Nystr\"{o}m approximation
\[
    \widehat K_{S_n}(x,y) \;=\; k(x)^\top K(S_n,S_n)^{-1}\, k(y).
\]
The residual kernel is the Schur complement
\begin{equation}\label{eq:residual-kernel}
    R_n(x,y) \;=\; K(x,y) - k(x)^\top K(S_n,S_n)^{-1} k(y).
\end{equation}
Note that if $S_n=\{z_1,\dots,z_n\}$ is chosen as the set of Cholesky pivots, then $R_n$ is exactly the Cholesky residual kernel after $n$ steps.

\begin{lemma}\label{lem:diag-equals-power2}
Let $\Omega\subset\mathbb{R}^d$ be compact and $K:\Omega\times \Omega \rightarrow \mathbb{R}$ a continuous SPD kernel. For every $x\in\Omega$ and every finite set of $n$ points $S_n\subset\Omega$, we have
\[
    R_n(x,x) = \big(P_{S_n}(x)\big)^2,
\]
where $R_n$ is the $n$th residual of the Cholesky algorithm that pivoted at $S_n$ and $P_{S_n}$ is the associated power function to the RKHS $\mathcal{H}_K$. 
\end{lemma}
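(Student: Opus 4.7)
The plan is to identify both quantities as the squared $\mathcal{H}_K$-norm of the pointwise error functional $\ell_x:\mathcal{H}_K\to\mathbb{R}$, $\ell_x(f):=f(x)-(I_{S_n}f)(x)$, and then compute that norm in two ways. By the definition of $P_{S_n}$, the squared power function is precisely the squared operator norm $\|\ell_x\|^2_{\mathcal{H}_K^*}$, so the task reduces to showing that this operator norm, computed via the Riesz representer of $\ell_x$, equals $R_n(x,x)$.

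First I would record the standard fact that $I_{S_n}$ is the orthogonal projection onto $V_n:=\mathrm{span}\{K(\cdot,z_1),\dots,K(\cdot,z_n)\}$. Indeed, $I_{S_n}f\in V_n$ by construction, and the interpolation conditions $(I_{S_n}f)(z_i)=f(z_i)$ combined with the reproducing property $g(z_i)=\langle g,K(\cdot,z_i)\rangle_{\mathcal{H}_K}$ show that $f-I_{S_n}f$ is orthogonal to every generator of $V_n$. Writing $P_{V_n}$ for this projection and using self-adjointness,
\[
\ell_x(f)=\langle f,K(\cdot,x)\rangle_{\mathcal{H}_K}-\langle P_{V_n}f,K(\cdot,x)\rangle_{\mathcal{H}_K}=\langle f,(I-P_{V_n})K(\cdot,x)\rangle_{\mathcal{H}_K},
\]
so $\ell_x$ is bounded with Riesz representer $e_x:=(I-P_{V_n})K(\cdot,x)$. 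By Riesz representation, $P_{S_n}(x)=\|\ell_x\|_{\mathcal{H}_K^*}=\|e_x\|_{\mathcal{H}_K}$.

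Next I would compute $\|e_x\|_{\mathcal{H}_K}^2$ explicitly. The Pythagorean theorem gives $\|e_x\|^2=\|K(\cdot,x)\|^2-\|P_{V_n}K(\cdot,x)\|^2$. The first term equals $K(x,x)$ by the reproducing property. For the second, parametrize $P_{V_n}K(\cdot,x)=\sum_j\beta_j K(\cdot,z_j)$; the orthogonality conditions $\langle K(\cdot,x)-P_{V_n}K(\cdot,x),K(\cdot,z_i)\rangle=0$ become the linear system $K(S_n,S_n)\beta=k(x)$, whence $\beta=K(S_n,S_n)^{-1}k(x)$ and $\|P_{V_n}K(\cdot,x)\|^2=\beta^\top K(S_n,S_n)\beta=k(x)^\top K(S_n,S_n)^{-1}k(x)$. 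Subtracting yields exactly the Schur complement in~\cref{eq:residual-kernel}, i.e., $\|e_x\|^2=R_n(x,x)$, and the identification of $R_n$ with the Cholesky residual kernel at pivots $S_n$ (noted just before the lemma) completes the argument.

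I do not expect any real obstacle: once the interpolant is recognized as an orthogonal projection, everything else is bookkeeping with the reproducing property. The only minor point to state carefully is the invertibility of $K(S_n,S_n)$, which is immediate from the SPD hypothesis on $K$, and the observation that the defining supremum for $P_{S_n}(x)$ is literally the operator norm of $\ell_x$ on $\mathcal{H}_K$ so that Riesz representation applies.
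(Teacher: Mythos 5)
Your argument is essentially the paper's: both identify the error functional $f\mapsto f(x)-(I_{S_n}f)(x)$ with inner product against a representer in $\mathcal{H}_K$ (your $e_x=(I-P_{V_n})K(\cdot,x)$ is literally the paper's $r_x$), conclude $P_{S_n}(x)=\|e_x\|_{\mathcal{H}_K}$ by Riesz/Cauchy--Schwarz, and expand the squared norm to obtain the Schur complement $K(x,x)-k(x)^\top K(S_n,S_n)^{-1}k(x)=R_n(x,x)$. The orthogonal-projection/Pythagoras framing you use is a mildly more conceptual packaging of the same computation, not a different route.
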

\begin{proof}
Let $r_x(\cdot) = K(\cdot,x) - \sum_{j=1}^n \beta_j K(\cdot,z_j)$ with $\beta = K(S_n,S_n)^{-1} k(x)$.  
By the reproducing property of $\mathcal{H}_K$, for all $f\in \mathcal{H}_K$,
\[
f(x) - (I_{S_n} f)(x) = \langle f, r_x \rangle_{\mathcal{H}_K},
\]
where $\langle \cdot, \cdot \rangle_{\mathcal{H}_K}$ is the $\mathcal{H}_K$ inner-product. 
Hence, by Cauchy--Schwarz and optimality,
\[
P_{S_n}(x) = \|r_x\|_{\mathcal{H}_K}.
\]
Expanding $\|r_x\|_{\mathcal{H}_K}^2$ using the definition of $\beta$ gives
\[
\|r_x\|_{\mathcal{H}_K}^2 = K(x,x) - k(x)^\top K(S_n,S_n)^{-1} k(x).
\]
By~\cref{eq:residual-kernel}, this equals $R_n(x,x)$. Thus $P_{S_n}(x)^2 = R_n(x,x)$.
\end{proof}

By~\Cref{lem:diag-equals-power2}, the squared power function at $x$ coincides with the diagonal residual $R_n(x,x)$. Therefore, the P-greedy selection rule used in the kernel literature and given by
\[
    z_{n+1} = \arg\max_{x\in \Omega} P_{S_n}(x)
\]
is identical to the Cholesky complete pivoting rule
\[
    z_{n+1} = \arg\max_{x\in \Omega} R_n(x,x).
\]
In other words, P-greedy and the Cholesky with complete pivoting select the same pivots, and are essentially the same algorithm viewed through two different lenses: kernel approximation versus function approximation. 

Our convergence results can therefore be stated in terms of bounding the power function. In particular, for a kernel satisfying the assumptions in~\Cref{thm:convergencegeneral}, we know that
\[
    \|P_{S_n}\|_\infty^2 = \|R_n\|_\infty \;\leq\; \frac{8LR}{n^{1/d}-1} = \mathcal{O}(n^{-1/d}), 
\]
whenever the pivot set $S_n$ is generated by complete pivoting (equivalently, by P-greedy selection). This provides a deterministic, dimension-dependent rate for the decay of the power function under very weak smoothness assumptions.

Santin and Haasdonk~\cite{santin6convergence} analyze the P-greedy method for translation-invariant kernels, using connections to Kolmogorov $n$-widths and greedy approximation in Banach spaces. Their results yield near-optimal convergence rates depending on the smoothness of the kernel, and show that P-greedy points become asymptotically uniformly distributed in $\Omega$. Compared with their bounds, our results apply to more general (non-translation-invariant) kernels, and remain valid even under low regularity (e.g., Lipschitz continuity on the diagonal). On the other hand, their bounds are sharper in the high-smoothness, translation-invariant setting. Thus, the two approaches are complementary: our theory provides robust, dimension-dependent convergence guarantees under weak assumptions, while the P-greedy literature offers refined rates for smooth kernels in structured settings.

\section{Conclusion}\label{sec:conclusion}

We proved that the pivoted Cholesky algorithm converges under minimal smoothness assumptions. For SPD kernels that are Lipschitz on the diagonal, the residual is controlled by the fill distance of the pivots (see~\Cref{thm:filldistancebound}), yielding
\[
\|R_n\|_\infty=\mathcal{O}(n^{-1/d})
\]
for complete pivoting (see~\Cref{thm:convergencegeneral}). With $C^{1,1}$ regularity, the diagonal of the residual is locally quadratic in the distance to the nearest pivot (see~\Cref{lem:boundedregiondiff2}), giving the improved rate of
\[
\|R_n\|_\infty=\mathcal{O}(n^{-2/d})
\]
for both uniform and complete pivoting (see~\Cref{thm:filldistance-squared}). We also established a discrete analogue for SPD matrices with a diagonal Lipschitz constant $G_A$, obtaining $\|A-A_n\|_{\max}=\mathcal{O}(n^{-1})$ (see~\Cref{cor:matrixversion}).

Practically, approximate ($\delta$-complete) and grid-based pivoting retain the same algebraic rates up to constants, justifying standard implementations. The results transfer directly to Gaussian process regression (uniform variance reduction) and clarify the equivalence between complete pivoting and P-greedy via the power function. Open directions include rank-revealing refinements beyond uniform bounds, understanding staircase effects in convergence plots, and extending stability and noise-robustness guarantees.

\section*{Acknowledgments} 
A.T.~is grateful for support from NSF CAREER (DMS-2045646), DARPA-PA-24-04-07, and the Office of Naval Research under Grant Number N00014-23-1-2729. 

\bibliographystyle{siam}
\bibliography{refs}

\end{document}